\definecolor{chianti}{rgb}{0.6,0,0}
\definecolor{meretale}{rgb}{0,0,.6}
\definecolor{leaf}{rgb}{0,.35,0}
\newtheorem{theorem}{Theorem}[section]
\newtheorem{lemma}[theorem]{Lemma}
\newtheorem{corollary}[theorem]{Corollary}
\newtheorem{proposition}[theorem]{Proposition}
\theoremstyle{definition}
\newtheorem{defn}[theorem]{Definition}
\newtheorem{ntn}[theorem]{Notation}
\newtheorem{example}[theorem]{Example}
\newtheorem{remark}[theorem]{Remark}
\numberwithin{equation}{theorem}
\newtheorem*{maintheorem}{Main Theorem}
\def\ara{\operatorname{ara}}
\def\height{\operatorname{ht}}
\def\init{\operatorname{in}}
\def\ker{\operatorname{ker}}
\def\rank{\operatorname{rank}}
\def\Gr{\operatorname{Gr}}
\def\GL{\operatorname{GL}}
\def\SL{\operatorname{SL}}
\DeclareMathOperator{\RI}{RI}
\def\fraka{\mathfrak{a}}
\def\frakm{\mathfrak{m}}
\def\CC{\mathbb{C}}
\def\NN{\mathbb{N}}
\def\QQ{\mathbb{Q}}
\def\ZZ{\mathbb{Z}}
\def\ge{\geqslant}
\def\le{\leqslant}
\def\phi{\varphi}
\def\to{\longrightarrow}
\def\mapsto{\longmapsto}
\def\into{\hookrightarrow}
\def\onto{{\,\,\longrightarrow\hspace{-1.8ex}\rightarrow}\,\,}
\def\mapsfrom{\mathrel{\reflectbox{\ensuremath{\mapsto}}}}
\begin{document}
\title[arithmetic rank of residual intersections]{The arithmetic rank of the residual intersections \\ of a complete intersection ideal}

\author[Batavia]{Manav Batavia}
\address{Department of Mathematics, Purdue University, 150 N University St., West Lafayette, IN~47907, USA}
\email{mbatavia@purdue.edu}

\author[Mohana Sundaram]{Kesavan Mohana Sundaram}
\address{Department of Mathematics, University of Nebraska, Lincoln, NE 68588-0130, USA}
\email{km2@huskers.unl.edu}

\author[Murray]{Taylor Murray}
\address{Department of Mathematics, University of Nebraska, Lincoln, NE 68588-0130, USA}
\email{tmurray11@huskers.unl.edu}

\author[Pandey]{Vaibhav Pandey}
\address{Department of Mathematics, Purdue University, 150 N University St., West Lafayette, IN~47907, USA}
\email{pandey94@purdue.edu}

\begin{abstract}
The arithmetic rank of an ideal in a polynomial ring over an algebraically closed field is the smallest number of equations needed to define its vanishing locus set-theoretically. We determine the arithmetic rank of the generic $m$-residual intersection of an ideal generated by $n$ indeterminates for all $m\geq n$ and in every characteristic. We further give an explicit description of its set-theoretic generators. Our main result provides a sharp upper bound for the arithmetic rank of any residual intersection of a complete intersection ideal in any Noetherian  local ring. In particular, given a complete intersection ideal of height at least two, any of its generic residual intersections---including its generic link---fails to be a set-theoretic complete intersection in characteristic zero.    
\end{abstract}
\subjclass[2010]{Primary 13C40; Secondary 13A35, 13A50, 13D45, 14F20}
\keywords{arithmetic rank, residual intersection, local cohomology, singular cohomology, \newline Algebra with a Straightening Law, invariant theory}
\maketitle
\tableofcontents

\section{Introduction}

Let $I$ be an ideal in a Noetherian ring $R$. The \textit{arithmetic rank} of $I$ is the smallest number of elements needed to generate it up to radical
\[\ara(I) \colonequals \min\{k : \text{there exist } f_1,\ldots,f_k \text{ in } R \text{ with} \sqrt{f_1,\ldots,f_k}=\sqrt{I}\}.\]
In particular, if $R$ is a polynomial ring over an algebraically closed field, $\ara(I)$ is the optimal number of equations needed to define the variety $V(I)=V(\sqrt{I})$. The simplicity of the definition of arithmetic rank belies the difficulty in computing it, see \cite{CowsikNori, HartshorneCI}. A part of this difficulty is that the set-theoretic generators may bear little resemblance to the generators of the given ideal \cite[Example 8.2]{LSW}. In addition, the arithmetic rank may depend on the characteristic of the ring \cite{Barile95, BarileLyubeznik}.

In this paper, we study the arithmetic rank of the \textit{residual intersections} of a complete intersection ideal. The notion of residual intersections, introduced by Artin and Nagata \cite{ArtinNagata}, is the higher codimension analogue of \textit{links} of ideals \cite{PeskineSzpiroLinkage}. In a different direction, the residual intersections of complete intersections arise as the defining ideals of certain \textit{varieties of complexes}, introduced by Buchsbaum and Eisenbud \cite{Buchsbaum-Eisenbud} (see \cite[Example 5.15]{Huneke-Ulrich85}). We now describe our results, and the objects involved, more precisely. 

We often call an ideal generated by a regular sequence a complete intersection ideal. Let $f_1,\ldots, f_n$ be a regular sequence in a Noetherian local ring $(R, \frakm)$ and $I \colonequals (\underline{f})$. Let $\fraka \subsetneq I$ be an ideal generated by $m$ elements with $m \geq n$ and set $J=\fraka :I$. If $\height(J)\geq m$, then $J$ is called an $m$-residual intersection of the complete intersection ideal $I$.

Since the ideal $\fraka \colonequals (a_1,\ldots,a_m)$ is contained in $I$, there exist elements $t_{ij}$ in $R$ such that
\[\begin{pmatrix}
    a_1\\
    \vdots\\
    a_m
\end{pmatrix} =\begin{pmatrix}
    t_{11} & t_{12} & \cdots & t_{1n}\\
    \vdots & \vdots &  & \vdots\\
    t_{m1} & t_{m2} & \cdots & t_{mn}
    \end{pmatrix}
    \begin{pmatrix}
    f_1\\
    \vdots\\
    f_n
\end{pmatrix} \colonequals T \underline{f}
   .\]
The generators of such residual intersections were calculated by Huneke and Ulrich in \cite[Example 3.4]{HunekeUlrichRI}:
\[J=I_n(T)+(T\underline{f}),\]
where $I_n(T)$ is the ideal generated by the size $n$-minors of the matrix $T$. The task at hand is to bound the arithmetic rank $\ara(J)$ for \textit{any} such choice of $T$. In order to do this, we work with generic residual intersections.

\begin{defn}
Let $\underline{f}\colonequals f_1,\ldots,f_n$ be a regular sequence in a Noetherian local ring $(R,\frakm)$ and $I \colonequals (\underline{f})$. Let $X$ be an $m \times n$ matrix of indeterminates with $m \geq n$. Let $\mathfrak{a}$ be the ideal generated by the entries of the matrix $X[f_1\dots f_n]^T$. We set 
\[ \RI(m,I)=\RI(m,\underline{f})\colonequals \mathfrak{a}R[X]:IR[X].\] The ideal $\RI(m,I)$ is called the \emph{generic $m$-residual intersection} of $I$. It is independent of the choice of the generating set of $I$ \cite[Lemma 2.2]{HunekeUlrichGenericRI}. 
\end{defn}

Furthermore, we replace the regular sequence $\underline{f}$ by indeterminates $\underline{y}\colonequals y_1,\ldots, y_n$ and focus on the residual intersection ideal $\RI(m,\underline{y})$. Once the arithmetic rank of $\RI(m,\underline{y})$ is determined, we get a sharp upper bound on the arithmetic rank of \textit{any} residual intersection of a complete intersection ideal using \cite[Example 3.4]{HunekeUlrichRI}, since the arithmetic rank can only go down under specialization. 

Indeed, we determine the arithmetic rank of the generic residual intersection $\RI(m,\underline{y})$ for all $m \geq n$ and in every characteristic. 

\begin{maintheorem}(Theorem \ref{thm:arageneric})
Let $m$ and $n$ be positive integers with $m \geq n$ and let $K$ be any field or the integers. Let $X$ and $\underline{y}$ be $m \times n$ and $n \times 1$ matrices of indeterminates respectively over $K$. The arithmetic rank of the generic $m$-residual intersection ideal $\RI(m,\underline{y}) = I_n(X)+(X\underline{y})$ in the polynomial ring $K[X,\underline{y}]$ is
\[
\mathrm{ara}(\RI(m,\underline{y})) = 
\begin{cases} 
n(m-n+1)+1 &   n \ge 2, \\
m & n=1.
\end{cases}
\]
\end{maintheorem}

As indicated above, an important consequence of this result is that it immediately gives a sharp upper bound for the arithmetic rank of \textit{any} residual intersection of a complete intersection ideal in any Noetherian local ring, see Theorem \ref{thm:main}. The bound obtained is much stronger than that in \cite[Corollary 4.3]{HamidJLMS}, where an upper bound on arithmetic rank is given for colon ideals in general (which may not be residual intersections).

Furthermore, via a faithfully flat base change, we also show that the arithmetic rank of the generic $m$-residual intersection of any complete intersection ideal in characteristic zero is as asserted in the Main Theorem. In particular, any such ideal is \textit{not} a set-theoretic complete intersection. We now lay out our strategy for computing the arithmetic rank. 

\begin{enumerate}
    \item In Section \ref{section:InvariantTheory}, we compute the arithmetic rank of $\RI(m,\underline{y})$ in characteristic zero. This is accomplished by identifying an intriguing connection with classical invariant theory and using it to deduce a local cohomology obstruction on the arithmetic rank as in \cite[Theorem 1.1]{JPSW}. 

    \item In Section \ref{section:ASL}, we determine the set-theoretic generators of $\RI(m,\underline{y})$. This is done by endowing the invariant ring of Section \ref{section:InvariantTheory} with the structure of an Algebra with a Straightening Law (ASL) in any characteristic.

    Given the important role it plays in the proof of our Main Theorem, we examine the above invariant/auxiliary ring more closely in Section \ref{section:ASL}. We show that it is a Gorenstein factorial domain. In positive characteristic, we prove that it is $F$-regular. The key insight is to initialize the straightening relations of the auxiliary ring to show that its initial subalgebra is also an ASL which, crucially, is normal. Once this is done, the singularity can be deformed back to the auxiliary ring. 
        
    For the interested reader, in Appendix \ref{section:Appendix}, we discuss an alternative approach to bounding the arithmetic rank of $\RI(m,\underline{y})$ from above which circumvents the ASL machinery. We construct a transcendence basis for the field of fractions of the auxiliary ring from first principles. This recovers the characteristic-free upper bound for the arithmetic rank of $\RI(m,\underline{y})$.

    \item In positive characteristic, the local cohomology obstruction to arithmetic rank, mentioned in item (1), vanishes. In view of this, we shift gears in Section \ref{section:Topology} and study the topology of the open complement of the variety of $\RI(m,\underline{y})$. The rough idea is to show that this open set \textit{cannot} be covered by a `small' number of basic affine open sets. Indeed, we show that singular cohomology recovers the lower bound enforced by local cohomology in characteristic zero. Importantly, with some effort, the topological arguments extend analogously to positive characteristic as well to give an \'etale cohomology obstruction, thereby finishing the proof. 
\end{enumerate}

Observe that since the arithmetic rank is subadditive, we a priori have the upper bound
\[\ara(\RI(m,\underline{y})) \leq \ara(I_n(X))+\ara(X\underline{y})=(mn-n^2+1)+m\]
using \cite{BrunsSchwanzl}. Our main result is that, independent of characteristic,
\[\ara(\RI(m,\underline{y}))=mn-n^2+1+n,\]
which is better than the above upper bound exactly by the difference $m-n$ of the codimensions of the residual intersection $\RI(m,\underline{y})$ and the complete intersection ideal $(\underline{y})$.


\section{A curious connection with classical invariant theory} \label{section:InvariantTheory}

The aim of this section is to compute the arithmetic rank of the generic $m$-residual intersection $\RI(m,\underline{y})$ of the complete intersection ideal $(\underline{y})\colonequals (y_1,\ldots,y_n)$ in characteristic zero for every $m \geq n$. This will be done by identifying an amusing connection with classical invariant theory.

Let $m\geq n$ be positive integers. Let $X\colonequals (x_{ij})$ and $\underline{y}\colonequals (y_i)$ be $m \times n$ and $n \times 1$ matrices of indeterminates respectively over a field $K$. Consider the linear action of the special linear group $\SL_n(K)$ on the polynomial ring $S \colonequals K[X,\underline{y}]$ as follows:
\[
M\colon\begin{cases} X & \mapsto XM^{-1}\\ \underline{y} & \mapsto M\underline{y}.\end{cases}
\]
Observe that this action is given by the regular representation of $\SL_n$ and by copies of its dual. Note that each element of the set
\[B \colonequals \{ \text{all entries of }\; X\underline{y} \; \text{and all size }n\text{-minors of } \; X  \}\]
is fixed under this action. When $K$ is infinite, the \textit{First Fundamental Theorem} for $\SL_n(K)$ states that the invariant ring is precisely the $K$-algebra generated by this set, i.e., $S^{\SL_n(K)}=K[B]$, see \cite[Theorem 3.3]{DeConciniProcesi76} or \cite[Theorem 4.4.4]{DerksenKemper}.

The \textit{key observation} of this section is that when the field $K$ is infinite, the generic $m$-residual intersection $\RI(m,\underline{y})$ arises as the \emph{nullcone ideal} $\frakm_{S^{\SL_n(K)}}S$ of the above action, where $\frakm_{S^{\SL_n(K)}}$ is the homogeneous maximal ideal of the invariant ring $S^{\SL_n(K)}$. In other words,
\[\RI(m,\underline{y})=\frakm_{S^{\SL_n(K)}}S\]
is the ideal generated by of all positive degree invariants in $S$. Having made this observation, the computation of the arithmetic rank follows rather elegantly from \cite[Theorem 1.1]{JPSW}. We write down the proof for the convenience of the reader.

\begin{proposition}\label{thm:nullcone}
Let $K$ be a field of characteristic zero. Then
\[\ara(\RI(m,\underline{y}))=\dim S^{\SL_n(K)}.\]
\end{proposition}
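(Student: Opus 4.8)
The plan is to determine $\ara(\RI(m,\underline{y}))$ by trapping it between an elementary upper bound, read off from a homogeneous system of parameters of the invariant ring, and the matching local cohomology lower bound supplied by \cite[Theorem 1.1]{JPSW}. Write $G\colonequals\SL_n(K)$, $S=K[X,\underline{y}]$, and $S^G=K[B]$ for the invariant ring, using the First Fundamental Theorem recalled above (valid since $K$, being of characteristic zero, is infinite); we assume $n\ge2$, so that $G$ is nontrivial, and then, by the key observation above, $\RI(m,\underline{y})$ is the nullcone ideal $\frakn\colonequals\frakm_{S^G}S$. Set $e\colonequals\dim S^G$.

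For the upper bound: $S^G=K[B]$ is a finitely generated graded $K$-algebra, so it has a homogeneous system of parameters $\theta_1,\dots,\theta_e$, whence $\frakm_{S^G}=\sqrt{(\theta_1,\dots,\theta_e)S^G}$. Since $\frakn=\frakm_{S^G}S$, we get
\[
\sqrt{(\theta_1,\dots,\theta_e)S}\;=\;\sqrt{\frakn}\;=\;\sqrt{\RI(m,\underline{y})},
\]
and therefore $\ara(\RI(m,\underline{y}))\le e=\dim S^G$.

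For the matching lower bound, I would invoke \cite[Theorem 1.1]{JPSW}, which applies since $G$ acts linearly on the polynomial ring $S$ and, as $\operatorname{char}K=0$, is linearly reductive: it yields $H^{e}_{\frakn}(S)\neq 0$, hence $\lcd(\frakn)\ge e$, and since $\ara(\frakb)\ge\lcd(\frakb)$ for every ideal $\frakb$, we get $\ara(\RI(m,\underline{y}))\ge e$. Combined with the upper bound this proves $\ara(\RI(m,\underline{y}))=e=\dim S^G$. For the reader's convenience I would also recall why the nonvanishing holds: linear reductivity gives a Reynolds operator splitting the inclusion $S^G\hookrightarrow S$ of $S^G$-modules, so $H^{e}_{\frakm_{S^G}}(S^G)$ is an $S^G$-module direct summand of $H^{e}_{\frakm_{S^G}}(S)=H^{e}_{\frakn}(S)$, and the former is nonzero by Grothendieck's nonvanishing theorem because $\dim S^G=e$. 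Note that this is strictly stronger than the general bound $\ara(\RI(m,\underline{y}))\ge\height\RI(m,\underline{y})=m$, since $e=n(m-n+1)+1>m$ for all $m\ge n\ge 2$: the local cohomological dimension of $\frakn$ genuinely exceeds its height, which is exactly why the cohomological input is indispensable.

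I do not expect a serious obstacle inside this argument — the content lies upstream, in the key observation $\RI(m,\underline{y})=\frakm_{S^G}S$ (taken as given here) and in \cite[Theorem 1.1]{JPSW} itself. The one point that must be checked is that $H^{e}_{\frakn}(S)$ does not vanish, which the Reynolds-splitting/Grothendieck argument above handles; the remaining hypotheses — that $\SL_n$ is linearly reductive and that its action on $S$ is linear — are immediate in characteristic zero. Once this equality is in hand, the explicit value $\dim S^G=n(m-n+1)+1$, agreeing with the Main Theorem, is obtained separately from the structure of $S^G$.
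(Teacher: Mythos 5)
Your argument is essentially identical to the paper's: the upper bound comes from extending a homogeneous system of parameters of $S^{\SL_n(K)}$ to $S$, and the lower bound comes from linear reductivity giving an $S^{\SL_n(K)}$-module splitting of $S^{\SL_n(K)}\hookrightarrow S$, forcing $H^{e}_{\RI(m,\underline{y})}(S)\neq 0$ and hence $\ara\ge e$ via \cite[Proposition 9.12]{24hours}. Your explicit restriction to $n\ge 2$ and your invocation of Grothendieck nonvanishing to justify $H^{e}_{\frakm_{S^G}}(S^G)\neq 0$ are both sound and spell out points the paper leaves implicit (indeed for $n=1$ the nullcone identification $\RI(m,\underline{y})=\frakm_{S^G}S$ fails, so the restriction is genuinely needed for the proposition as stated).
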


\begin{proof}
 Set $T \colonequals S^{\SL_n(K)}$ and $d \colonequals \dim T$. Any homogeneous system of parameters of the invariant ring $T$ generates its homogeneous maximal ideal $\frakm_{T}$ up to radical. Viewing this in the polynomial ring $S$, we get
\[\ara(\RI(m,\underline{y})) \leq d,\]
independent of characteristic.

Since the special linear group is linearly reductive in characteristic zero, the inclusion
$T \into S$ of $T$-modules splits. It follows that the local cohomology module supported at the nullcone ideal
\begin{equation}
\label{equation:theorem:ara:intro}
H^d_{\frakm_T}(T)\otimes_TS\ =\ H^d_{\frakm_T}(S)\ =\ H^d_{\frakm_TS}(S)=\ H^d_{\RI(m,\underline{y})}(S)
\end{equation}
is nonzero. But then $\ara(\RI(m,\underline{y}))\ge d$ since the local cohomology $H^d_{\RI(m,\underline{y})}(S)$ can be computed by the \v{C}ech complex on the $\ara(\RI(m,\underline{y}))$ many generators witnessing the arithmetic rank of $\RI(m,\underline{y})$ (see~\cite[Proposition~9.12]{24hours}).
\end{proof}

While the connection with invariant theory is remarkably effective in characteristic zero, the `local cohomology obstruction' in the above proof vanishes in positive characteristic. Indeed, we show next the stronger statement that the above inclusion does \textit{not} split in positive characteristic; compare with \cite[Theorem 1.1]{HJPS}. In order to do this, we would need the dimension of the `auxiliary ring' $K[B]$, which is calculated in the next section (see Theorem \ref{thm:ASLdimension}) by endowing it with the structure of an Algebra with a Straightening Law.

\begin{lemma} \label{lem:PS}
Let $K$ be a field of positive characteristic. The inclusion
\[K[B]\into K[X,\underline{y}]\]
has no $K[B]$-linear splitting for any choice of $m$ and $n$ with $m \geq n$.
\end{lemma}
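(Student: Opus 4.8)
The plan is to obtain a contradiction from a hypothetical splitting by playing off top local cohomology against the Peskine--Szpiro vanishing theorem. Write $S\colonequals K[X,\underline y]$ and $d\colonequals\dim K[B]$. Since $B$ consists of homogeneous elements of positive degree that generate $K[B]$ as a $K$-algebra, the homogeneous maximal ideal of $K[B]$ is $\frakm_{K[B]}=(B)K[B]$, and therefore
\[
\frakm_{K[B]}\,S\ =\ (B)S\ =\ I_n(X)+(X\underline y)\ =\ \RI(m,\underline y),
\]
using the Huneke--Ulrich formula for $\RI(m,\underline y)$ recalled in the introduction. Now assume that the inclusion $\iota\colon K[B]\into S$ has a $K[B]$-linear retraction $\phi\colon S\to K[B]$. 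Applying the functor $H^d_{\frakm_{K[B]}}(-)$ of local cohomology over $K[B]$ to the identity $\phi\circ\iota=\id$ realizes $H^d_{\frakm_{K[B]}}(K[B])$ as a $K[B]$-module direct summand of $H^d_{\frakm_{K[B]}}(S)$, and the latter is canonically $H^d_{\frakm_{K[B]}S}(S)=H^d_{\RI(m,\underline y)}(S)$ because local cohomology is computed by a \v{C}ech complex on generators of $\frakm_{K[B]}$ and so is insensitive to the passage from $K[B]$-modules to $S$-modules. Since $K[B]$ is a finitely generated graded $K$-algebra of Krull dimension $d$ with irrelevant ideal $\frakm_{K[B]}$, Grothendieck non-vanishing gives $H^d_{\frakm_{K[B]}}(K[B])\neq 0$. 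Hence it is enough to show that $H^d_{\RI(m,\underline y)}(S)=0$ whenever $\operatorname{char}K=p>0$.

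This vanishing is where characteristic $p$ does its work. The quotient $S/\RI(m,\underline y)$ is a generic residual intersection of the complete intersection ideal $(\underline y)$; by the theory of residual intersections of strongly Cohen--Macaulay ideals \cite{HunekeUlrichRI,HunekeUlrichGenericRI}, it is a Cohen--Macaulay ring of dimension $\dim S-m$, so that $\height(\RI(m,\underline y))=m$. The Peskine--Szpiro vanishing theorem for a regular ring of characteristic $p$ (see \cite{24hours}) then forces
\[
\lcd_S(\RI(m,\underline y))\ =\ \height(\RI(m,\underline y))\ =\ m.
\]
On the other hand, Theorem \ref{thm:ASLdimension} gives $d=nm-n^2+n+1$, and $d-m=(n-1)(m-n)+1\ge 1$; thus $d>m\ge\lcd_S(\RI(m,\underline y))$ and $H^d_{\RI(m,\underline y)}(S)=0$, contradicting the previous paragraph. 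Therefore no $K[B]$-linear splitting can exist.

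It is instructive to note that this argument turns the mechanism of Proposition \ref{thm:nullcone} on its head: in characteristic zero the Reynolds operator makes $H^d_{\RI(m,\underline y)}(S)$ \emph{nonzero} (it contains $H^d_{\frakm_{K[B]}}(K[B])$ as a summand), which both pins down the arithmetic rank and forces the splitting; in characteristic $p$ the same module \emph{vanishes}, which is exactly what rules the splitting out. The substantive inputs are the Cohen--Macaulayness of the generic residual intersection and the Peskine--Szpiro theorem; granting those, the argument is purely formal. The only point demanding care, which I do not expect to be a real obstacle, is checking that these inputs hold in the stated generality---over an arbitrary field of characteristic $p$ (in particular a finite or imperfect one) and for every $m\ge n$, including the link case $m=n$, where one may alternatively deduce Cohen--Macaulayness of $S/\RI(m,\underline y)$ from the linkage theorem---and likewise that the dimension formula of Theorem \ref{thm:ASLdimension} is valid over any base field.
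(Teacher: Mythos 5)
Your argument is correct and is essentially the paper's own proof: both assume a splitting, use it (via Grothendieck non-vanishing for the top local cohomology of $K[B]$) to force $H^d_{\RI(m,\underline y)}(S)\neq 0$, and then contradict this via Cohen--Macaulayness of $S/\RI(m,\underline y)$ together with the Peskine--Szpiro vanishing theorem. The only cosmetic differences are that the paper cites Huneke's variety-of-complexes result for Cohen--Macaulayness rather than the residual-intersection literature, and it phrases the conclusion as the impossible equality $d=m$ (equivalently $(n-1)(m-n)=-1$) rather than your direct inequality $d>m$.
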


\begin{proof}
Set $S\colonequals K[X,\underline{y}]$. By Theorem \ref{thm:ASLdimension}, the ring $K[B]$ has dimension $n(m-n+1)+1$, Assume that the given inclusion splits. Then by the argument in Proposition \ref{thm:nullcone}, we must have that the local cohomology module
 \[H^{n(m-n+1)+1}_{\RI(m,\underline{y})}(S)\]
is nonzero. However, the coordinate ring of the nullcone
\[S/\RI(m,\underline{y}) = S/I_n(X)+(X\underline{y})\]
is a Cohen--Macaulay domain since it is a variety of complexes, see \cite[Theorem 6.2]{HunekeVoC}. By the vanishing theorem of Peskine--Szpiro \cite[Proposition~III.4.1]{PS74}, we then have
\[H^i_{\RI(m,\underline{y})}(S) \neq 0 \quad \text{if and only if } i=\height(\RI(m,\underline{y})).\]
The $m$-residual intersection $\RI(m,\underline{y})$ has height $m$. This forces
\[n(m-n+1)+1=m.\]
Rearranging terms, we get
\[(n-1)(m-n)=-1.\]
This is a contradiction since $m \geq n$.
\end{proof}

Another consequence of the invariant ring being an ASL is that the local cohomology obstruction yields the arithmetic rank over the integers as well.

\begin{remark}\label{rmk:Z}
Replacing the field $K$ by the integers $\ZZ$, we have the inclusion
\[\ZZ[B] \into \ZZ[X,\underline{y}].\]
Note that the argument in the beginning of Proposition \ref{thm:nullcone} also applies on replacing the field by the integers since $\ZZ[B]$ is an ASL by Theorem \ref{thm:ASL}. The point is that the ASL structure guarantees a ``$\ZZ$-homogeneous" system of parameters of the ring $\ZZ[B]$ by \cite[Proposition 5.10]{BrunsVetter}. Furthermore, the local cohomology obstruction clearly persists over the integers. Therefore, the arithmetic rank of the residual intersection ideal $(B)\ZZ[X,\underline{y}]$ is the same number as asserted in Proposition \ref{thm:nullcone}. 
\end{remark}

\section{Upper bound on the arithmetic rank: Structure of the auxiliary ring} \label{section:ASL}

Note that Proposition \ref{thm:nullcone} shows that \textit{any} choice of a homogeneous system of parameters of the invariant ring witnesses the arithmetic rank of $\RI(m,\underline{y})$. In this section, we make these polynomials explicit. This is done by finding an ASL structure on the invariant ring. 

We remark that the generators of the invariant ring $S^{\SL_n(K)}$ from the previous section are listed in \cite[Theorem 3.3]{DeConciniProcesi76} and the relations between these generators can be found in \cite[\S 9.4]{PopovVineberg}. We were, however, unable to find a reference for the Krull dimension of the invariant ring in the literature. We are grateful to De Concini and Procesi for their detailed feedback on the content of this section.   

We continue with the notation established in the previous section: $m \geq n$ are positive integers. $X\colonequals (x_{ij})$ and $\underline{y} \colonequals (y_i)$ are $m\times n$ and $n \times 1$ matrices of indeterminates respectively. We set $Q_i$ to be the $i$-th entry of the matrix $X\underline{y}$ and $[i_1,\dots,i_n]$ to be the size $n$-minor of $X$ with rows $i_1<i_2<\cdots <i_n$. Let 
\[B \colonequals \{\{Q_1,\dots,Q_m\}\cup\{[i_1,\dots,i_n]|1\leq i_1<\cdots<i_n\leq m\}\] be the set of $K$-algebra generators of $K[B]$, where $K$ is any ring. Since the main arguments of this section work over any base ring, we drop the usage of `invariant ring' and instead use `auxiliary ring' when referring to $K[B]$ from now on.

\subsection{The auxiliary ring is an ASL}

We define a partial order $<$ on $B$ as follows:
\begin{itemize}
    \item $Q_i \leq Q_j$ if $i \leq j$.
    \item $Q_j \leq [i_1 , i_2 , \ldots , i_n]$ if $j \leq i_n$.
    \item $[i_1,\dots,i_n]\leq [j_1,\dots,j_n]$ if $i_k\leq j_k$ for all $k$.
\end{itemize}

Let us explain this partial order with an example.

\begin{example}\label{example:posetorder}
    Consider the case where $X$ and $\underline{y}$ are respectively $4 \times 2$ and $2 \times 1$ matrices of indeterminates. Then, with the convention that smaller elements are at the top, the partial order on $B$ is given by the following Hasse diagram:
   \[\begin{tikzcd}
	&& {Q_1} \\
	&& {Q_2} \\
	& {Q_3} && {[1,2]} \\
	{Q_4} && {[1,3]} \\
	& {[1,4]} && {[2,3]} \\
	&& {[2,4]} \\
	&& {[3,4]}
	\arrow[no head, from=1-3, to=2-3]
	\arrow[no head, from=2-3, to=3-2]
	\arrow[no head, from=2-3, to=3-4]
	\arrow[no head, from=3-2, to=4-1]
	\arrow[no head, from=3-2, to=4-3]
	\arrow[no head, from=3-4, to=4-3]
	\arrow[no head, from=4-1, to=5-2]
	\arrow[no head, from=4-3, to=5-2]
	\arrow[no head, from=4-3, to=5-4]
	\arrow[no head, from=5-2, to=6-3]
	\arrow[no head, from=5-4, to=6-3]
	\arrow[no head, from=6-3, to=7-3]
\end{tikzcd}\]
\\
\end{example}

We recall the definition of an ASL.

\begin{defn}
    Suppose $A$ is a ring and that $H \subset A$ is a finite poset. A \textit{standard monomial} is a product of a totally ordered set of elements of $H$: $$ \alpha_1 \cdots \alpha_k \ \text{where} \ \alpha_{1} \leq \alpha_{2} \leq \cdots \leq \alpha_{k}. $$
\end{defn}

\noindent Assume now that $A$ is a $K$-algebra for a ring $K$ and that the elements of $H$ generate $A$ as a $K$-algebra. 
\begin{defn}
    The ring $A=K[H]$ is an \textit{Algebra with a Straightening Law} if it satisfies the following axioms:
\begin{itemize}
    \item (ASL-1) The algebra $A$ is a free $K$-module with basis given by the set of standard monomials.
    \item (ASL-2) If $\alpha, \beta \in H$ are incomparable, and if 
    $$\alpha \cdot \beta = \sum_i c_i \cdot (\gamma_{i1} \cdots \gamma_{i2} \cdots \gamma_{in})$$
    is the unique expression of $\alpha \cdot \beta$ as a linear combination of standard monomials (here $c_i$ is nonzero and $\gamma_{i1}\leq \cdots \leq \gamma_{in}$ for all $i$) , then $\gamma_{i1} < \alpha$ and $\gamma_{i1} < \beta$ for all $i$. 
    The relations in (ASL-2) are called the\textit{ straightening relations} of the algebra $A$. 
\end{itemize}
\end{defn}


\begin{theorem}\label{thm:ASL}
    Let $K$ be any ring. The auxiliary ring $K[B]$ is an ASL. 
\end{theorem}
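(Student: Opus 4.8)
The plan is to verify the two ASL axioms for $K[B]$ directly. The cleanest route is to exhibit $K[B]$ as a subring of a polynomial ring (or a well-understood ASL) and transport the straightening law. Concretely, note that the $[i_1,\dots,i_n]$ alone generate a copy of the homogeneous coordinate ring of the Grassmannian $G(n,m)$, which is the prototypical ASL on the poset of $n$-subsets of $\{1,\dots,m\}$ with the componentwise order, and whose straightening law is the classical Plücker/shuffle relation — this already handles (ASL-1) and (ASL-2) for pairs of incomparable minors. What remains is to incorporate the linear forms $Q_1,\dots,Q_m$. Since $Q_i$ is linear in the $x_{ij}$ while each minor $[i_1,\dots,i_n]$ is of degree $n$, and the $Q_i$ are totally ordered among themselves, the only genuinely new straightening relations are those of the form $Q_j \cdot [i_1,\dots,i_n]$ with $j > i_n$ (so that $Q_j$ and the minor are incomparable), together with a compatibility check that no relation is needed between two $Q$'s or between $Q_j$ and a minor with $j \le i_n$ (these pairs are comparable, so their products are already standard).

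The key step, then, is to derive the straightening relation for $Q_j \cdot [i_1,\dots,i_n]$ when $j > i_n$. I expect this to follow from a Laplace-type expansion: expanding $Q_j = \sum_k x_{jk} y_k$ and using the Plücker relations that express $x_{jk}\cdot[i_1,\dots,i_n]$ in terms of minors containing row $j$ (the ``row $j$ can be shuffled in'' relations), one obtains
\[
Q_j\cdot[i_1,\dots,i_n] \;=\; \sum_{\ell=1}^{n}(-1)^{?}\, Q_{i_\ell}\cdot[i_1,\dots,\widehat{i_\ell},\dots,i_n,j],
\]
or a relation of this shape, where on the right each term is a product of a $Q$ with a minor whose smallest entry $i_1$ (for $\ell\ge 2$) or $i_2$ (for $\ell=1$) is strictly larger than... — here one must be careful — the requirement for (ASL-2) is that the \emph{smallest factor} $\gamma_{i1}$ of each standard monomial on the right is $<Q_j$ and $<[i_1,\dots,i_n]$. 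Since $Q_{i_\ell}\le Q_j$ fails to be strict only when $i_\ell=j$, which cannot happen as $i_\ell\le i_n<j$, the $Q$-factor is strictly below $Q_j$; and one checks $Q_{i_\ell}<[i_1,\dots,i_n]$ since $i_\ell\le i_n$. The other inequality, that the smaller factor is $<[i_1,\dots,i_n]$, is where the poset relations $Q_j\le[\cdots]$ iff $j\le i_n$ and the componentwise order interact, and it needs the straightened form of the right-hand side minors to have small enough entries. The main obstacle is precisely managing this: after writing the naive identity one may still need to straighten the minors appearing (those like $[i_1,\dots,\widehat{i_\ell},\dots,i_n,j]$ are already sorted, so in fact no further straightening is needed, which is a relief), and then verify the two strictness conditions on the leading factor term by term; doing the sign bookkeeping correctly in the Laplace expansion is the fiddly part.

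Once the straightening relations are in hand, (ASL-2) is immediate from the inequalities just discussed, so the real content is (ASL-1): that the standard monomials form a $K$-basis of $K[B]$. I would argue this in two halves. Spanning: any monomial in the generators can be rewritten, using the Plücker relations among the minors and the new $Q_j\cdot[\dots]$ relations above, as a $K$-linear combination of standard monomials — this is a standard Noetherian induction on a suitable term order (e.g.\ order by the multiset of generators appearing, refined by the poset), exactly as in the Grassmannian case, since every straightening relation strictly decreases the leading term. Linear independence: it suffices to produce a ring in which the images of the standard monomials are visibly independent; the natural candidate is to specialize or to use the known result that $K[\text{minors}]$ has the Plücker straightening basis and then observe that adjoining the $Q_i$ (which involve the extra variables $y_k$ linearly) cannot create relations — concretely, one can filter by $y$-degree, and the associated graded identifies with the Grassmannian ASL tensored with a polynomial ring in the $Q_i$'s, whose standard-monomial basis is known. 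Alternatively, and perhaps most efficiently, one invokes \cite[Proposition~5.10]{BrunsVetter} or the general criterion that a graded $K$-algebra generated by a poset, satisfying (ASL-2) and in which standard monomials span, and whose Hilbert series matches the expected count of standard monomials (computable via a shellability/Hodge-algebra argument on the poset), is automatically an ASL. I expect (ASL-1)'s linear-independence half, handled via the $y$-degree filtration reducing to the classical Grassmannian straightening law, to be routine once the bookkeeping for the straightening relations is set up, so the Laplace-expansion identity for $Q_j\cdot[i_1,\dots,i_n]$ together with its sign and order verification remains the crux of the proof.
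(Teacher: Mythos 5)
Your verification of (ASL-2) is essentially the same as the paper's, though you propose deriving the key identity
\[
Q_j\cdot[i_1,\dots,i_n] \;=\; \sum_{\ell=1}^{n}(-1)^{?}\, Q_{i_\ell}\cdot[i_1,\dots,\widehat{i_\ell},\dots,i_n,j]
\]
from a Laplace expansion of $Q_j$ followed by Plücker relations, whereas the paper obtains it in one step by expanding along the last column of the singular $(n+1)\times(n+1)$ matrix with rows $(x_{i_1,1},\dots,x_{i_1,n},Q_{i_1}),\dots,(x_{j,1},\dots,x_{j,n},Q_j)$; the last column is a $K[\underline{y}]$-linear combination of the others, so the determinant vanishes. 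Your observation that the right-hand minors are already sorted (because $j>i_n$) and that the order inequalities follow from $i_\ell\le i_n<j$ is correct and matches the paper's reasoning.

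The genuine gap is in your treatment of the linear-independence half of (ASL-1). You claim that filtering by $y$-degree yields an associated graded isomorphic to the Grassmannian ASL tensored with a polynomial ring in the $Q_i$, and that therefore adjoining the $Q_i$ "cannot create relations." This fails. First, the $y$-degree is already a grading on $K[X,\underline{y}]$ (the minors have $y$-degree $0$ and each $Q_i$ has $y$-degree $1$, both homogeneous), so the associated graded is just $K[B]$ again and nothing has been simplified. Second, and more seriously, the straightening relation displayed above is itself homogeneous of $y$-degree $1$, so it survives in any associated graded; the $Q_i$ are emphatically \emph{not} algebraically independent over the subring of Plücker coordinates. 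A dimension count confirms this: $\dim K[B]=n(m-n+1)+1$ while the Plücker subring has dimension $n(m-n)+1$, so adjoining the $m$ elements $Q_1,\dots,Q_m$ increases dimension by only $n$, which is strictly less than $m$ whenever $m>n$. Your fallback appeal to a Hilbert-series/shellability criterion begs the question, since one does not know the Hilbert series of $K[B]$ in advance. What is actually needed is the argument the paper gives: fix the lexicographic order with $y_1<\dots<y_n<x_{11}<\dots<x_{mn}$, compute $\operatorname{in}(Q_i)=x_{i,n}y_n$ and $\operatorname{in}([j_1,\dots,j_n])=x_{j_1,1}\cdots x_{j_n,n}$, and show by a careful exponent comparison (using the comparability constraints in a standard monomial $Q\mu$) that distinct standard monomials have distinct initial monomials in $K[X,\underline{y}]$. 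This initial-term argument is the content you are missing, and it is also what makes the later Sagbi-basis and normality results of Section 3.3 of the paper possible.
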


\begin{proof}
    We first prove that the standard monomials in $B$ are linearly independent over $K$. Consider the lexicographic monomial order defined by 
    \[y_1<y_2<\cdots<y_n<x_{11}<x_{12}<\cdots<x_{1n}<x_{21}<\cdots<x_{mn}.\] 
    Under this monomial order, the initial monomials of $Q_i$ and $[j_1,\dots,j_n]$ in $K[X,\underline{y}]$ are: 
    \[\init(Q_i)=x_{i,n}y_n \; \text{ and } \; \init([j_1,\dots,j_n])=x_{j_1,1}\cdots x_{j_n,n}.\]
    Note that any standard monomial in $B$ can be expressed as $Q\mu$, where $Q$ is a standard monomial in the $Q_i$ and $\mu$ is a standard monomial in the maximal minors. Given any $Q_i$ with nonzero exponent in $Q$ and any minor $[j_1,\dots,j_n]$ with nonzero exponent in $\mu$, we have $i\leq j_n.$ To prove the $K$-linear independence of the standard monomials, it suffices to prove that every standard monomial has a distinct initial monomial. Here, it may be prudent to note that standard monomials are monomials in $K[B]$, but polynomials in $K[X,\underline{y}]$.
    
    Suppose that two distinct standard monomials $Q\mu$ and $Q'\mu'$ have the same initial monomial, i.e., \[\init(Q)\init(\mu) = \init(Q')\init(\mu').\] If $Q=Q'$ then $\init(\mu)=\init(\mu')$; but then $\mu=\mu'$ by \cite[Proposition 3.3.4]{BCRV}. Thus we must have $Q\neq Q'$. Let 
    \[Q=Q_{i_1}\cdots Q_{i_t}\; \text{and } \; Q'=Q_{j_1}\cdots Q_{j_t}.\] 
    Since the degree of $y_n$ in $\init(Q)\init(\mu)$ and $\init(Q')\init(\mu')$ is the same, the degree of $Q$ must be equal to the degree of $Q'$ and we can write 
    \[\init(Q) = (x_{i_1,n}.y_n)\cdots(x_{i_t,n}.y_n) \; \text{and} \; \init(Q') = (x_{j_1,n}.y_n)\cdots(x_{j_t,n}.y_n).\] Since $\init(Q) \neq \init(Q')$, we may assume without loss of generality that $\init(Q)$ has a higher exponent of $x_{in}$ than $\init(Q')$ for some $i$. Then $\init(\mu')$ has a higher exponent of $x_{in}$ than $\init(\mu).$ Consequently, by a degree counting argument, $\init(\mu)$ has a higher exponent of $x_{kn}$ for some $k\neq i$ and $\init(Q)$ has a lower exponent of $x_{kn}$ than $\init(Q')$. As $Q\mu$ is standard, $Q_i<[\ldots k]$, which implies $i<k$. Similarly, as $Q'\mu'$ is standard, we conclude $k<i$, a contradiction. Therefore distinct standard monomials have distinct initial monomials and we are done. The fact that the standard monomials span $K[B]$ as a $K$-vector space follows from (ASL-2) below.
    
    We now prove (ASL-2). We partition $B$ into sets \[\mathcal{C}=\{Q_1,\dots,Q_m\}\; \text{and} \; \mathcal{D}=\{[i_1,\dots,i_n]|1\leq i_1<\cdots < i_n\leq m\}.\] Let $\alpha$ and $\beta$ be incomparable elements in $H$. Since $\mathcal{C}$ is totally ordered, we have $\alpha\in \mathcal{D}$ or $\beta\in\mathcal{D}$. If both $\alpha$ and $\beta$ lie in $\mathcal{D}$, then $\alpha\beta$ can be expressed as required since $K[\mathcal{D}]$ is an ASL with the poset ordering induced by the one on $B$ by \cite[Theorem 4.3]{BrunsVetter}. 

    \noindent Suppose now that $\alpha\in \mathcal{C}$ and $\beta\in\mathcal{D}$. Let $\alpha=Q_j$ and $\beta=[i_1,\dots,i_n]$. As $\alpha$ and $\beta$ are incomparable, we have $j>i_n$. Note that the determinant
    $$\begin{vmatrix}
        x_{i_1,1} & x_{i_1,2} & \cdots & x_{i_1,n} & Q_{i_1}\\
        x_{i_2,1} & x_{i_2,2} & \cdots & x_{i_2,n} & Q_{i_2}\\
        \vdots & \vdots & \ddots & \vdots & \vdots\\     
        x_{i_n,1} & x_{i_n,2} & \cdots & x_{i_n,n} & Q_{i_n}\\
        x_{j,1} & x_{j,2} & \cdots & x_{j,n} & Q_{j}\\
    \end{vmatrix}=0$$ as the last column is a linear combination of the other columns. Expanding the determinant along the last column allows us to express $Q_j[i_1,\dots,i_n]$ as a $K$-linear combination of the monomials $Q_{i_t}[i_1,\dots,\hat{i_t},\dots,i_n,j]$ for $\ 1\leq t \leq n$. Each of these monomials is standard as $i_t\leq i_n < j.$ We also have $Q_{i_t}<\alpha$ and $Q_{i_t}<\beta$, as required. 
\end{proof}

Now that we know $K[B]$ is an ASL, there is a canonical way to obtain its homogeneous system of parameters. We recall this construction next:

\begin{defn}[{\cite[Page 55]{BrunsVetter}}]
   Given a finite poset $H$ and $\mu\in H$, define $$\text{rank}(\mu)=\max\{k|\text{ there exists a chain of }\mu=\mu_k>\mu_{k-1}>\cdots>\mu_1 \text{ for }\mu_i\in H\}.$$ We also define $\text{rank}(H)=\max\{\text{rank}(\mu):\mu\in H\}.$ 
\end{defn}

\begin{proposition}[{\cite[Lemma 5.9]{BrunsVetter}}]\label{prop:SOPofASL}
Let $H$ be a finite poset and $A=K[H]$ be an ASL. Let $x_i=\sum_{\text{rank}(\mu)=i}\mu$ for all $1\leq i\leq \rank(H)$. Then, $\{x_1,\dots,x_{rank(H)}\}$ is a system of parameters of $A$.    
\end{proposition}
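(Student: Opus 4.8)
The plan is to reduce the statement to its combinatorial core, namely the \emph{discrete} ASL attached to $H$, where it becomes a question about Stanley--Reisner rings. Recall from the structure theory of ASLs that $A=K[H]$ admits a flat degeneration to the discrete ASL $D(H)$: the $K$-algebra that is again $K$-free on the standard monomials but in which $\alpha\beta=0$ whenever $\alpha,\beta\in H$ are incomparable. Ring-theoretically $D(H)$ is nothing but the Stanley--Reisner ring $K[\Delta(H)]$ of the order complex $\Delta(H)$ of $H$, whose faces are exactly the chains of $H$ (its minimal non-faces being the incomparable pairs). The degeneration is manufactured from a filtration of $A$ built on a linear extension of the partial order, and it is here that axiom (ASL-2) is essential: because in a straightening relation $\alpha\beta=\sum_i c_i\,\gamma_{i1}\gamma_{i2}\cdots$ the smallest factor $\gamma_{i1}$ lies strictly below both $\alpha$ and $\beta$, the incomparable products survive as leading terms and the associated graded ring is precisely $D(H)$. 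Such a degeneration preserves Krull dimension and sends the element $x_i=\sum_{\rank(\mu)=i}\mu$ of $A$ to the element $\bar x_i$ of $D(H)$ given by the same formula; consequently $\dim A=\dim D(H)$, and a lift to $A$ of a system of parameters of $D(H)$ is again a system of parameters. So it suffices to prove the proposition for $A=D(H)=K[\Delta(H)]$.

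Now suppose $A=K[\Delta(H)]$ and write $r=\rank(H)$. Since the Krull dimension of a Stanley--Reisner ring is $1+\dim\Delta(H)$, which here is the largest number of elements of a chain of $H$, we have $\dim A=r$, so it remains only to show that $A/(x_1,\dots,x_r)$ is artinian. The Stanley--Reisner ring $A$ is reduced, its minimal primes being the ideals $P_C=(\mu:\mu\notin C)$ as $C$ ranges over the maximal chains of $H$, with $A/P_C\cong K[\mu:\mu\in C]$ a polynomial ring; hence $\Spec A=\bigcup_C V(P_C)$ and it suffices to show that for each maximal chain $C$ the images of $x_1,\dots,x_r$ in $K[\mu:\mu\in C]$ generate its maximal ideal $(\mu:\mu\in C)$. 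But if $\mu<\nu$ in $H$ then $\rank(\mu)<\rank(\nu)$, so the elements of the chain $C$ have pairwise distinct ranks, each lying in $\{1,\dots,r\}$; therefore $x_i$ maps to the unique element of $C$ of rank $i$ when such an element exists and to $0$ otherwise, and as each $\mu\in C$ is hit in this way (by $x_{\rank(\mu)}$), the images of $x_1,\dots,x_r$ are exactly the variables $\{\mu:\mu\in C\}$ together with some zeros. These generate $(\mu:\mu\in C)$, so $A/(P_C+(x_1,\dots,x_r))\cong K$; thus $\dim A/(x_1,\dots,x_r)=0$, and since $r=\dim A$ the elements $x_1,\dots,x_r$ form a system of parameters.

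I expect the genuine difficulty to be concentrated entirely in the first paragraph: constructing the filtration on $A$ whose associated graded ring is $K[\Delta(H)]$, checking that the initial form of $x_i$ is the corresponding element $\bar x_i$, and confirming that a lift of a system of parameters through this degeneration remains one. This is exactly the ``discretization of an ASL'' carried out in general in \cite[\S5]{BrunsVetter}, where axiom (ASL-2) plays the decisive role; granting it, the Stanley--Reisner computation above is routine. One could instead avoid the degeneration by inducting on $\rank(H)$ and splitting off the rank-one part of $H$ (using (ASL-2), distinct minimal elements of $H$ annihilate one another, which makes $A/(x_1)$ amenable to analysis); the combinatorics of how the rank function changes when minimal elements are deleted then matches the defining formula for the $x_i$. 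Either route funnels into the same elementary computation.
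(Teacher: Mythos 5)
The paper gives no proof of this statement; it is quoted directly from \cite[Lemma 5.9]{BrunsVetter}, so the comparison is against that source rather than an argument in the paper. Your Stanley--Reisner computation in the second paragraph is correct: modulo each minimal prime $P_C$ of $D(H)=K[\Delta(H)]$, the images of the $x_i$ sweep out the variables of $K[\mu:\mu\in C]$, and the intersection of all the $P_C$ is nilpotent. The genuine gap is exactly where you suspect it, in the assertion that the degeneration ``sends $x_i$ to $\bar x_i$.'' Granting that a flat degeneration $A\to D(H)$ exists and preserves Krull dimension, the lifting argument requires that $\bar x_i$ be the \emph{initial form} of $x_i$ with respect to the filtration defining the degeneration, and that is not automatic. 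Axiom (ASL-2) controls only the least factor $\gamma_{i1}$ of each straightening monomial, not the larger ones, so a weight function that degenerates the straightening relations (one with $w(\alpha)+w(\beta)>w(\gamma_{i1})+w(\gamma_{i2})$ for every relation) cannot in general be taken to depend on $\rank$ alone, and a step-by-step discretization likewise need not be constant on rank levels. If the initial form of $x_i$ is a proper subsum $\sum_{\mu\in S_i}\mu$ with $S_i\subsetneq\{\mu:\rank\mu=i\}$, it may miss the rank-$i$ element of some maximal chain $C$, and then the images modulo $P_C$ no longer generate the maximal ideal of $K[\mu:\mu\in C]$ --- so a sop in $D(H)$ need not lift to $(x_1,\dots,x_r)$.

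The clean repair, which your closing paragraph gestures at, dispenses with the degeneration for the primality part and argues directly in $A$ using (ASL-2): show by induction on $i$ that every $\mu\in H$ of rank $i$ lies in $\sqrt{(x_1,\dots,x_i)}$. Indeed $\mu x_i=\mu^2+\sum_{\nu\neq\mu,\ \rank\nu=i}\mu\nu$, and each such $\nu$ is incomparable to $\mu$, so by (ASL-2) every standard monomial appearing in the straightened $\mu\nu$ contains a factor of rank $<i$, hence lies in $\sqrt{(x_1,\dots,x_{i-1})}$ by induction; therefore $\mu^2\in(x_i)+\sqrt{(x_1,\dots,x_{i-1})}\subseteq\sqrt{(x_1,\dots,x_i)}$. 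This makes $(x_1,\dots,x_r)$ primary to the irrelevant ideal. For the equality $\dim A=r$ you also need no filtration: by (ASL-1) both $A$ and $D(H)$ are free on the standard monomials, hence have identical Hilbert functions, so $\dim A=\dim D(H)=r$ follows from the Stanley--Reisner computation you already did.
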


With these combinatorial inputs at our disposal, we are able to make explicit the polynomials which witness the arithmetic rank of $\RI(m,\underline{y})$.   

\begin{theorem}\label{thm:ASLdimension}
Let $K$ be a field. The ring $K[B]$ has dimension $n(m-n+1)+1$ with a homogeneous system of parameters given by 
    \[
\left\{ \sum_{\substack{\operatorname{rank}(\mu)=i}} \mu \;:\; i=1,\ldots,n(m-n+1)+1 \right\}.
\]

\end{theorem}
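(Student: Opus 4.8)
The plan is to compute $\rank(B)$ for the poset $B$ and then invoke Proposition~\ref{prop:SOPofASL} together with Proposition~\ref{thm:nullcone} (or Theorem~\ref{thm:ASL} over a general base). Since $K[B]$ is an ASL by Theorem~\ref{thm:ASL}, it is known (see \cite[Corollary~5.11]{BrunsVetter}) that $\dim K[B] = \rank(B)$; and by Proposition~\ref{prop:SOPofASL} the explicit sums $x_i = \sum_{\rank(\mu)=i}\mu$ for $i=1,\dots,\rank(B)$ form a homogeneous system of parameters — note these are genuinely homogeneous in $K[X,\underline{y}]$ only because all elements of a given rank have the same degree, which for us fails ($Q_i$ has degree $2$ in $y,X$ whereas minors have degree $n$), so I should instead give the grading on $K[B]$ itself under which each poset element is homogeneous of degree $1$, making the $x_i$ homogeneous of degree $i$; the statement of the theorem is with respect to this ASL grading. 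So the entire content of the theorem reduces to the combinatorial identity
\[
\rank(B) = n(m-n+1)+1.
\]

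To establish this, first I would compute the rank of a maximal minor $[j_1,\dots,j_n]$ inside the sub-poset $\mathcal{D}$. A maximal descending chain from $[j_1,\dots,j_n]$ in $\mathcal{D}$ decreases the tuple entrywise, so the longest chain below $[j_1,\dots,j_n]$ in $\mathcal{D}$ has length $\sum_{k=1}^n (j_k - k) + 1 = \bigl(\sum j_k\bigr) - \binom{n+1}{2} + 1$; this is classical for the poset of minors (e.g. \cite[\S 4]{BrunsVetter}). The rank is maximized at the top minor $[m-n+1,\dots,m]$, giving $\rank_{\mathcal D}\bigl([m-n+1,\dots,m]\bigr) = n(m-n) + 1$. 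Next I account for the chain $Q_1 > Q_2 > \cdots > Q_m$ sitting above $\mathcal{D}$: the covering relations are $Q_j \le [i_1,\dots,i_n]$ whenever $j\le i_n$, so $Q_m$ can be placed directly above the minor $[i_1,\dots,i_n]$ exactly when $i_n \ge m$, i.e. $i_n = m$. Thus the longest chain in all of $B$ passes through $Q_1 > Q_2 > \cdots > Q_m > [\,\cdot\,,\dots,\cdot\,, m\,] > \cdots$, and its minimal portion below $Q_m$ is a maximal chain in $\mathcal{D}$ ending at a minor with largest index $m$; the longest such chain in $\mathcal D$ ends at $[m-n+1,\dots,m]$ and has $\mathcal D$-rank $n(m-n)+1$. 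Adding the $m$ elements $Q_1,\dots,Q_m$ on top gives
\[
\rank(B) = m + \bigl(n(m-n)+1\bigr) = n(m-n+1)+1,
\]
after rewriting $m + n(m-n) = nm - n^2 + n = n(m-n+1)$. I should double-check that no chain avoiding the $Q_i$'s is longer — but any chain lying entirely in $\mathcal D$ has length at most $n(m-n)+1 < n(m-n+1)+1$, and any chain using some $Q_j$ can be extended upward to include all of $Q_1,\dots,Q_j$ and downward into $\mathcal D$ below some minor $[i_1,\dots,i_n]$ with $i_n\ge j$; the choice $j=m$, minor $[m-n+1,\dots,m]$ is optimal, confirming the maximum.

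I expect the main obstacle to be bookkeeping rather than a genuine difficulty: precisely verifying that the two pieces (the $Q$-chain and the minor-chain) concatenate into a single saturated chain of the claimed length and that nothing longer exists, i.e. that $\rank(B)$ is attained exactly at $Q_1 > \cdots > Q_m > [m-n+1,\dots,m] > \cdots > [1,\dots,n]$. One subtlety worth stating carefully is the grading: the theorem asserts a \emph{homogeneous} system of parameters, which is legitimate for the intrinsic $\NN$-grading on the ASL $K[B]$ (each generator in degree $1$), under which $x_i$ is homogeneous of degree $i$; combined with Proposition~\ref{prop:SOPofASL} this finishes the proof. Finally, the dimension claim $\dim K[B] = n(m-n+1)+1$ is consistent with Proposition~\ref{thm:nullcone}: in characteristic zero $K[B] = S^{\SL_n(K)}$, so $\ara(\RI(m,\underline y)) = \dim S^{\SL_n(K)} = n(m-n+1)+1$, recovering the Main Theorem's value for $n\ge 2$.
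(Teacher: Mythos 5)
Your overall approach matches the paper's: both reduce the theorem to computing $\rank(B)$ and then invoke Proposition~\ref{prop:SOPofASL}. Your observation about the grading is correct and worth making explicit—the sums $x_i$ are homogeneous only in the intrinsic ASL grading (each poset element in degree $1$), not in the ambient grading of $K[X,\underline y]$ when $n\neq 2$, and it is precisely homogeneity in \emph{some} positive $\NN$-grading of $K[B]$ that the arithmetic-rank argument of Proposition~\ref{thm:nullcone} requires.

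However, your computation of $\rank(B)$ contains a genuine error. You claim the longest chain consists of all of $Q_1<\cdots<Q_m$ concatenated with a maximal chain in $\mathcal D$ of length $n(m-n)+1$, and you then ``rewrite'' $m+n(m-n)=n(m-n+1)$. That identity is false for $m>n$: the left side is $nm-n^2+m$ while the right side is $nm-n^2+n$. The conceptual issue underlying the false arithmetic is that the proposed chain does not exist. A chain containing $Q_m$ can only contain minors $[i_1,\ldots,i_n]$ with $i_n=m$, and the longest chain of such minors, from $[1,\ldots,n-1,m]$ to $[m-n+1,\ldots,m]$, has length $(n-1)(m-n)+1$, not $n(m-n)+1$. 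The corrected count is $m+\bigl((n-1)(m-n)+1\bigr)=n(m-n+1)+1$, which does match the paper; alternatively the paper's own chain starts the minor part at $[1,\ldots,n]$ and so uses only $Q_1<\cdots<Q_n$, giving $n+\bigl(n(m-n)+1\bigr)=n(m-n+1)+1$. So the final number is right, but your derivation of it relies on a chain that is not totally ordered and on an incorrect algebraic simplification; as written, the proof does not go through.
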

\begin{proof}
    It may be observed that the rank of the poset $B$ is $n(m-n+1)+1$. One way to see this is by noting the existence of the chain
    \begin{align*}
    Q_1<\cdots<Q_n<[1,2,\ldots n]<[1,\ldots,n-1,n+1]<\cdots<[1,\ldots,n-1,m]<\\
    [1,\ldots,n-2,n,m]<[1,\ldots,n-2,n+1,m]<\cdots<[m-n+1,\ldots,m];
    \end{align*}
    the rest follows from Proposition \ref{prop:SOPofASL}.
\end{proof}

\begin{example}
    Returning to Example \ref{example:posetorder}, we have $\dim(K[B])=7$; the polynomials \[Q_1,\; Q_2,\; Q_3+[1,2],\; Q_4+[1,3], \;[1,4]+[2,3],\; [2,4], \; \text{and }[3,4]\] form a system of parameters of $K[B]$. Equivalently, they realize the arithmetic rank of the ideal $\RI(4,(y_1,y_2))$.
\end{example}

\subsection{The auxiliary ring is a Gorenstein factorial domain}

In this subsection, we study the homological properties of the auxiliary ring $K[B]$, where $K$ is a field; in the next subsection we study its singularity in positive characteristic. We clarify that these results have no bearing on the arithmetic rank of the residual intersection under consideration.

We first show that the auxiliary ring is Cohen--Macaulay. This will be achieved by studying the underlying combinatorial structure more closely.

\begin{defn}
    An element $\beta \in H$ is a \textit{cover} of $\alpha \in H$ if $\beta > \alpha$ and if there is no element $\gamma$ lying strictly between $\alpha$ and $\beta$, satisfying $\beta > \gamma >\alpha$. 
    
    The poset $H$ is said to be \textit{wonderful} if the following holds after a smallest and a greatest element $-\infty$ and $\infty$, respectively, have been added to $H$: 
    
    If $\alpha \in H \ \cup \ \{ -\infty \}$, $\gamma \in H \ \cup \ \{ \infty \}$, and $\beta_1,\beta_2 \in H$ are covers of $\alpha$ satisfying $\beta_1<\gamma$ and $\beta_2 < \gamma$, then there exists an element $\beta \in H \ \cup \ \{ \infty \}$ with $\beta \leq \gamma$ which covers both $\beta_1$ and $\beta_2$. 
\end{defn}

Since a graded ASL on a wonderful poset is Cohen--Macaulay by \cite[Theorem 5.14]{BrunsVetter}, we have 

\begin{lemma}\label{lem:wondeful}
    $K[B]$ is a Cohen--Macaulay domain.
\end{lemma}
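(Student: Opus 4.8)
The plan is to invoke \cite[Theorem 5.14]{BrunsVetter}, which asserts that a graded ASL over a field on a wonderful poset is Cohen--Macaulay. We already know from Theorem \ref{thm:ASL} that $K[B]$ is a graded ASL on the poset $B$ (the grading being inherited from the standard grading on $K[X,\underline{y}]$, under which each $Q_i$ has degree $2$ and each minor $[i_1,\dots,i_n]$ has degree $n$; if one prefers a genuine $\mathbb{N}$-grading with the poset relations compatible with degree, one can instead use the ``ASL grading'' in which every poset element has degree $1$, which is the setting of \cite{BrunsVetter}). So the only thing to check is that $B$ is wonderful in the sense of the definition above.

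First I would record the covering relations in $B$. The covers of $Q_i$ are $Q_{i+1}$ (for $i<m$), together with $[1,2,\dots,n-1,i]$ when $i \geq n$ (these are the minors whose largest row index equals $i$ that are minimal among such minors with respect to the componentwise order; one must check $Q_i \leq [1,\dots,n-1,i]$, which holds since $i = i_n$, and that no minor lies strictly between). The covers of a minor $[i_1,\dots,i_n]$ are exactly the minors $[i_1,\dots,i_{k}+1,\dots,i_n]$ obtained by raising one index by one, subject to the result still being a strictly increasing sequence bounded by $m$ --- this is the standard cover relation in the poset of maximal minors, and $\mathcal D$ being wonderful is \cite[Theorem 5.4 or Lemma in \S5]{BrunsVetter}. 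With these in hand, the verification of the wonderful condition for $B$ splits into cases according to where $\alpha$ and the two covers $\beta_1,\beta_2$ live: (i) $\alpha,\beta_1,\beta_2$ all in $\mathcal D$ (or $\alpha=-\infty$, $\beta_1,\beta_2\in\mathcal D$) --- here the conclusion follows from the fact, already used implicitly via \cite[Theorem 4.3]{BrunsVetter}, that $\mathcal D$ is wonderful, provided one checks that a common cover $\beta$ of $\beta_1,\beta_2$ inside $\mathcal D\cup\{\infty\}$ with $\beta\le\gamma$ still works in $B$; (ii) $\alpha\in\mathcal C$ (or $\alpha=-\infty$) with at least one of $\beta_1,\beta_2$ in $\mathcal C$: since $\mathcal C$ is a chain, the only covers of $Q_i$ in $\mathcal C$ is $Q_{i+1}$, so if $\beta_1=Q_{i+1}$ and $\beta_2=[1,\dots,n-1,i]\in\mathcal D$ are two distinct covers of $\alpha=Q_i$, one exhibits $\beta=[1,\dots,n-1,i+1]$, checks $\beta$ covers both $Q_{i+1}$ (as $i+1=\beta_n$) and $[1,\dots,n-1,i]$ (raising the last index), and checks $\beta\le\gamma$ for any $\gamma\in B\cup\{\infty\}$ with $\beta_1,\beta_2<\gamma$ --- the latter because $\gamma$ must be a minor with $\gamma_n\ge i+1$ and with the first $n-1$ components $\ge 1$, hence $\gamma\ge[1,\dots,n-1,i+1]=\beta$.

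The step I expect to be the main obstacle is case (ii): one has to be careful that whenever $\gamma$ dominates both a $Q$-cover and a minor-cover of some $Q_i$, the proposed common cover $\beta$ (always a minor) genuinely satisfies $\beta\le\gamma$ in the partial order on $B$; this is really a small lemma saying that $[1,2,\dots,n-1,i+1]$ is the unique minimal element of $\mathcal D$ lying above $Q_i$ in $B$, together with the elementary observation that any element of $B$ strictly above a minor is itself a minor. Once this is established, the remaining subcases (where $\gamma=\infty$, or where only $\mathcal D$-elements are involved) are either trivial or reduce to the wonderfulness of $\mathcal D$. Having verified that $B$ is wonderful, \cite[Theorem 5.14]{BrunsVetter} gives that $K[B]$ is Cohen--Macaulay; it is a domain because it is a subring of the domain $K[X,\underline{y}]$. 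This completes the proof.
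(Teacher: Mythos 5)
Your proof follows the same route as the paper: reduce to showing the poset $B$ is wonderful, dispose of the case $\alpha\in\mathcal D$ by the known wonderfulness of the poset of maximal minors, and for $\alpha=Q_i$ with $n\le i<m$ exhibit $\beta=[1,\dots,n-1,i+1]$ as the common cover of $Q_{i+1}$ and $[1,\dots,n-1,i]$, checking $\beta\le\gamma$ via the observation that any $\gamma$ above a minor is itself a minor with $\gamma_n\ge i+1$. One small slip worth fixing: $[1,\dots,n-1,i+1]$ is the unique minimal element of $\mathcal D$ above $Q_{i+1}$, not above $Q_i$ (whose unique minimal minor cover is $[1,\dots,n-1,i]$), but this does not affect the argument you actually carried out.
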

\begin{proof}
    Indeed, we show that the poset $B$ is wonderful. Let $\alpha\in B\cup\{-\infty\}$. We know that the poset $\{[i_1,\dots,i_n]|1\leq i_1\leq\cdots i_n\leq m\}$ with ordering induced by the ordering on $B$ is wonderful by \cite[Corollary 5.17]{BrunsVetter}. So we may assume that $\alpha\in\{Q_1,\dots,Q_m\}\cup\{-\infty\}.$ If $\alpha=-\infty$, the only cover of $\alpha$ is $Q_1$. Suppose that $\alpha=Q_i$. If $i<n$, the only cover of $\alpha$ is $Q_{i+1}$. On the other hand, if $i\geq n$, the covers of $\alpha$ are $Q_{i+1}$ and $[1,2,\dots,n-1, i]$. Thus, if $i=m$ or $i<n$, $\alpha$ has a unique cover, and we are done. 

    Let $\alpha=Q_i$ for $n\leq i<m$. Let $\gamma\in B\cup\{\infty\}$ such that $Q_{i+1}$ and $[1,\dots,n-1,i]$ are less than $\gamma$. Observe that $\beta:=[1,\dots,n-1,i+1]$ covers both $Q_{i+1}$ and $[1,\dots,n-1,i]$. If $\gamma=\infty$, there is nothing to prove. As $[1,\dots,n-1,i]<\gamma$, $\gamma$ cannot be one of the $Q_i$. Suppose $\gamma=[j_1,\dots,j_n].$ Then, $Q_{i+1}<\gamma$ implies $i+1\leq j_n$. We conclude that 
    \[\beta=[1,\dots,n-1,i+1]\leq\gamma=[j_1,\dots,j_n]. \qedhere\]
\end{proof}

We next show that the auxiliary ring is Gorenstein. The following lemma will be useful to this end. 

\begin{lemma}[{\cite[Page 297]{BrunsHerzog}}]  \label{lem:UFD}
    Let $S$ be a polynomial ring over a field $K$, and $G$ a group acting on $S$ by $K$-algebra automorphisms. If there are no nontrivial homomorphisms from $G$ to $K^{\times}$, then the invariant ring $S^G$ is a unique factorization domain (UFD).
\end{lemma}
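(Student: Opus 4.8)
The plan is to reduce the statement to \emph{Kaplansky's criterion}: an integral domain is a UFD if and only if every nonzero prime ideal contains a prime element. This sidesteps any appeal to Noetherianity or to divisor theory. The only place the hypothesis on $G$ will be used is the preliminary observation that \emph{every semiinvariant of $S$ is already an invariant}, which I would prove first. Since $S$ is a polynomial ring over the field $K$ we have $S^{\times}=K^{\times}$, and since $G$ acts by $K$-algebra automorphisms it fixes $K^{\times}$ pointwise; hence if $0\neq f\in S$ satisfies $g(f)=c_g f$ with $c_g\in K^{\times}$ for all $g\in G$, then from $c_{gh}f=(gh)(f)=g(c_h f)=c_h\,g(f)=c_h c_g f$ it follows that $g\mapsto c_g$ is a group homomorphism $G\to K^{\times}$, which is trivial by hypothesis, so $f\in S^G$.

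Next set $A\colonequals S^G$; I would also record the elementary identity $A=S\cap\operatorname{Frac}(A)$, since a ratio of $G$-invariants that happens to lie in $S$ is itself $G$-invariant. Given a nonzero prime $\mathfrak{p}$ of $A$, choose $0\neq a\in\mathfrak{p}$ and factor it in the UFD $S$ as $a=u\prod_{i=1}^{s}\pi_i^{n_i}$, with $u\in S^{\times}$ and the $\pi_i$ pairwise non-associate primes. Because $a$ is $G$-invariant and $g$ fixes $u\in K^{\times}$, uniqueness of factorization forces each $g\in G$ to permute the \emph{finite} set $\{S\pi_1,\dots,S\pi_s\}$ of principal primes while preserving multiplicities; in particular the exponent is constant along each $G$-orbit. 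For an orbit $O$ put $b_O\colonequals\prod_{S\pi_i\in O}\pi_i$, well defined up to a factor in $K^{\times}$; by construction $g(b_O)\in K^{\times}\,b_O$ for every $g$, so $b_O\in A$ by the first step, and $a$ equals a unit times a product of powers of the finitely many $b_O$. As $\mathfrak{p}$ is prime, some $b\colonequals b_{O_0}$ lies in $\mathfrak{p}$, and $b$ is a nonzero non-unit of $A$. It remains to check that $b$ is prime in $A$: if $b\mid xy$ with $x,y\in A$, then $b\mid xy$ in $S$, so some $\pi_i$ in the orbit $O_0$ divides, say, $x$ in $S$; applying each $g\in G$ and using $g(x)=x$ shows every prime in $O_0$ divides $x$ in $S$, whence $b\mid x$ in $S$; but then $x/b\in S\cap\operatorname{Frac}(A)=A$, so $b\mid x$ in $A$. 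Thus $\mathfrak{p}$ contains a prime element, and $A=S^G$ is a UFD.

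I expect the only genuinely delicate point to be the handling of the orbit products when $G$ is infinite: the construction of $b_O$ as a finite product is legitimate precisely because the $G$-orbit of a prime factor of the \emph{fixed} element $a$ is automatically finite, and this, combined with the ``semiinvariant $\Rightarrow$ invariant'' lemma, is exactly where the absence of nontrivial characters $G\to K^{\times}$ is indispensable. A more conceptual but heavier alternative would be to observe that $A=S\cap\operatorname{Frac}(A)$ is a Krull domain and to run a Samuel--Fossum-type descent argument for divisor class groups, deducing $\operatorname{Cl}(A)=0$ from $\operatorname{Cl}(S)=0$ (as $S$ is a UFD) together with $\Hom(G,S^{\times})=0$; the elementary route above avoids the divisor-theoretic setup and the ramification bookkeeping it would entail.
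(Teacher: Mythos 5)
Your proof is correct, and it takes a genuinely different route from the reference the paper cites. The paper itself gives no argument for Lemma~\ref{lem:UFD}---it points to Bruns--Herzog, where (as you note in your final paragraph) the statement is proved by divisor-theoretic descent: one observes that $S^G=S\cap\operatorname{Frac}(S^G)$ is a Krull domain and that the extension $S^G\subseteq S$ induces an injection $\operatorname{Cl}(S^G)\hookrightarrow\operatorname{Cl}(S)=0$ once the relevant group of characters is shown to vanish (a Samuel--Fossum-type argument). Your route via Kaplansky's criterion replaces all of the Krull/class-group infrastructure with two entirely elementary facts about the ambient polynomial ring, namely $S^\times=K^\times$ and unique factorization, plus the ``semiinvariant $\Rightarrow$ invariant'' reduction. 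Each step checks out: $G$ permutes the finite set of principal primes $S\pi_1,\dots,S\pi_s$ dividing a chosen $a\in\mathfrak p$ and preserves multiplicities (by uniqueness of factorization applied to $g(a)=a$ together with $g(u)=u$), the orbit products $b_O$ are semiinvariants and hence invariants, $a$ is a unit times a product of the $b_O$ so $\mathfrak p$ contains some $b=b_{O_0}$, and the final verification that $b$ is prime in $A$ correctly uses $g(x)=x$ to propagate divisibility around the orbit, coprimality of the $\pi_j$ to get $b\mid x$ in $S$, and the identity $A=S\cap\operatorname{Frac}(A)$ to descend the divisibility to $A$. The one imprecision is in your closing sentence: the finiteness of a $G$-orbit of a prime factor of the fixed element $a$ is automatic, since the orbit lies inside the finite set of primes dividing $a$, and has nothing to do with the absence of nontrivial characters $G\to K^\times$; that hypothesis enters \emph{only} in the ``semiinvariant $\Rightarrow$ invariant'' step, where it guarantees each $b_O$ lands in $S^G$. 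This is purely a matter of attribution and does not affect the validity of the argument.
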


\begin{theorem}\label{thm:Gorenstein}
The auxiliary ring $K[B]$ is Gorenstein. 
\end{theorem}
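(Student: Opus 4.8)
The plan is to combine three facts: $K[B]$ is Cohen--Macaulay (Lemma~\ref{lem:wondeful}), $K[B]$ is a normal domain, and the class group of $K[B]$ is trivial; a Cohen--Macaulay normal domain with trivial (canonical) class that is moreover a graded ring will then be Gorenstein by a standard argument on the graded canonical module. First I would establish that $K[B]$ is a UFD, hence normal. Over an infinite field $K$ this is immediate from Lemma~\ref{lem:UFD}: we realized $K[B]=S^{\SL_n(K)}$ in Section~\ref{section:InvariantTheory}, and $\SL_n$ has no nontrivial characters (its abelianization is trivial, as $\SL_n$ is perfect for $n\ge 2$, and for $n=1$ the group is trivial), so $S^{\SL_n(K)}$ is a UFD. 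For a general field (or to be safe about finite fields, where the invariant-theoretic identification is more delicate), I would instead deduce factoriality directly from the ASL structure: $K[B]$ is a normal ASL on the wonderful poset $B$, and one can check via the straightening relations that the minimal elements $Q_1,\dots,Q_n$ (and the minor $[1,\dots,n]$) generate height-one primes with quotients that are again ASLs on sub/quotient posets of $B$, running the usual Bruns--Vetter style induction for factoriality of ASLs on wonderful posets; alternatively one localizes at a suitable element to reduce to a polynomial ring.

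Next I would pin down the canonical module. Since $K[B]$ is a graded Cohen--Macaulay normal domain, its canonical module $\omega_{K[B]}$ is a rank-one reflexive (hence, by factoriality, free) module, so $\omega_{K[B]}\cong K[B](a)$ for some shift $a\in\ZZ$; this already gives that $K[B]$ is Gorenstein. To make the argument self-contained without invoking factoriality twice, one can run it through local cohomology: $H^d_{\frakm}(K[B])$, $d=\dim K[B]=n(m-n+1)+1$, has Matlis dual $\omega_{K[B]}$, which is a maximal Cohen--Macaulay module of rank one over a normal domain; being reflexive of rank one over a UFD it is free, whence Gorenstein.

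The main obstacle is the normality/factoriality step in full generality (arbitrary field $K$, including finite fields), since the clean invariant-theoretic input requires $K$ infinite. I expect the cleanest route is: prove normality of $K[B]$ from the ASL structure by exhibiting that its initial subalgebra is a normal ASL (this is exactly what the paper announces it will do in the next subsection for the $F$-regularity argument), then lift normality from the initial algebra to $K[B]$ by the standard semicontinuity/deformation argument; combined with Cohen--Macaulayness and the computation of the canonical class, Gorensteinness follows. If one is willing to cite the next subsection's results, the proof compresses to: $K[B]$ is Cohen--Macaulay, normal, and factorial, hence its canonical module is free, hence $K[B]$ is Gorenstein.
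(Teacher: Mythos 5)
Your core argument for infinite $K$ is exactly the paper's: identify $K[B]=S^{\SL_n(K)}$, observe $\SL_n$ has no nontrivial characters so Lemma~\ref{lem:UFD} gives a UFD, add Cohen--Macaulayness from Lemma~\ref{lem:wondeful}, and invoke Murthy's theorem (your explicit canonical-module argument is precisely what underlies that citation, so that's fine). Where you diverge is the finite-field case, and there your sketch has real gaps. The paper handles it by passing to $\overline{K}$: since $\overline{K}[B]=K[B]\otimes_K\overline K$, one compares divisor class groups across this faithfully flat extension and concludes $\operatorname{Cl}(K[B])$ is trivial from the triviality of $\operatorname{Cl}(\overline K[B])$ established in the infinite case. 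You did not find this base-change trick and instead propose two substitutes, neither of which works as stated. First, your ASL-based factoriality sketch claims $Q_1,\dots,Q_n$ (and $[1,\dots,n]$) are the minimal elements of $B$ and generate height-one primes; but $B$ has a \emph{unique} minimal element $Q_1$ (the $Q_i$ form a chain, as the Hasse diagram in Example~\ref{example:posetorder} shows), so the inductive setup you describe does not match the poset. Second, your compressed fallback says ``cite the next subsection: $K[B]$ is Cohen--Macaulay, normal, and factorial'' --- but Theorem~\ref{thm:initialnormal} (via the initial subalgebra) only delivers \emph{normality}, not factoriality, so that route still leaves a hole exactly where one is needed. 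In short: same engine, but your finite-field fix is either incorrect (wrong minimal elements) or circular (assumes factoriality you have not established), whereas the paper resolves it cleanly by base change to the algebraic closure.
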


\begin{proof}
    If the field $K$ is infinite, then $K[B]$ arises as the invariant ring of the action of the special linear group $\SL_n(K)$, as discussed in the beginning of Section \ref{section:InvariantTheory}. Since the commutator subgroup of $\SL_n(K)$ is itself, any group homomorphism from $\SL_n(K)$ to the Abelian group $K^{\times}$ is trivial. It follows from Lemma \ref{lem:UFD} that $K[B]$ is a UFD; it is Cohen--Macaulay by Lemma \ref{lem:wondeful}. The assertion now follows from the fact that a finitely generated Cohen--Macaulay algebra over a field which is UFD is also Gorenstein \cite{MurthyUFD}; compare with \cite[Remark 3.10]{SinghGIAN}. 

    If the field $K$ is finite, let $\overline{K}$ be its algebraic closure. The inclusion
    \[K[B] \into \overline{K}[B]\]
    induces a natural surjection on the divisor class groups. The assertion now follows from the above paragraph.
\end{proof}

\subsection{The auxiliary ring is F-regular}

In this subsection, we prove that when the base field $K$ has positive characteristic, the auxiliary ring $K[B]$ is $F$-regular. The key observation is to initialize the straightening relations on the auxiliary ring to deduce that its initial subalgebra is a normal ASL. We then deform the singularity to the auxiliary ring.

We first recall the monomial order defined in the proof of Theorem \ref{thm:ASL}: the lexicographic monomial order defined by 
    \[y_1<y_2<\cdots<y_n<x_{11}<x_{12}<\cdots<x_{1n}<x_{21}<\cdots<x_{mn}.\] 
    Under this monomial order, the initial monomials of $Q_i$ and $[j_1,\dots,j_n]$ are: 
    \[\init(Q_i)=x_{i,n}y_n \; \text{ and } \; \init([j_1,\dots,j_n])=x_{j_1,1}\cdots x_{j_n,n}.\]
\begin{lemma}\label{lemma:sagbi}
    The initial subalgebra of $K[B]$ with respect to the aforementioned monomial order is $K[\init(f)\;|\;f\in B]$. In other words, $B$ is a Sagbi basis of $K[B]$.
\end{lemma}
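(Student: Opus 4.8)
The plan is to use the standard criterion for a Sagbi basis: a set $B$ generates its initial algebra if and only if every element of $K[B]$ has its initial term lying in the monomial algebra $K[\init(f) \mid f \in B]$, and by a theorem of Robbiano--Sweedler (see \cite{BCRV}) it suffices to check that the initial terms of the ``Sagbi relations'' --- i.e. the binomial relations among the initial monomials $\init(f)$ --- lift to elements of $K[B]$ whose initial term is already in $K[\init(f) \mid f \in B]$. In practice the cleanest route, given that Theorem \ref{thm:ASL} is already in hand, is to argue via standard monomials and a degree/Hilbert-function count. First I would recall that the standard monomials in $B$ form a $K$-basis of $K[B]$ (axiom (ASL-1)), and that in the proof of Theorem \ref{thm:ASL} it was shown that distinct standard monomials have distinct initial monomials in $K[X,\underline{y}]$. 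Hence the initial monomials $\{\init(Q\mu) : Q\mu \text{ standard}\}$ are $K$-linearly independent, and they all lie in $K[\init(f) \mid f \in B]$ since $\init(Q\mu) = \prod \init(Q_{i_t}) \cdot \prod \init([j_1,\dots,j_n])$.

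Next I would observe the reverse containment at the level of monomials: every monomial of $K[\init(f) \mid f \in B]$ is a product $\prod_t \init(Q_{a_t}) \cdot \prod_s \init([\,\underline{j}^{(s)}\,])$ over arbitrary (not necessarily standard) collections. The heart of the matter is to show that each such monomial equals $\init(Q'\mu')$ for a \emph{standard} monomial $Q'\mu'$ of the same multidegree. For the minors-only part this is exactly the classical fact that maximal minors of a generic matrix form a Sagbi basis --- the ``straightening'' of a product of minors does not change the leading monomial, cf. \cite[Proposition 3.3.4]{BCRV} and \cite[Theorem 4.3]{BrunsVetter}. For the mixed part one uses the straightening relation exhibited in the proof of Theorem \ref{thm:ASL}: expanding the vanishing $(n+1)\times(n+1)$ determinant rewrites $Q_j[i_1,\dots,i_n]$ (with $j > i_n$) as a $K$-linear combination of the standard monomials $Q_{i_t}[i_1,\dots,\hat{i_t},\dots,i_n,j]$, and one checks that exactly one of these standard terms carries the monomial $\init(Q_j)\init([i_1,\dots,i_n]) = x_{j,n} y_n \cdot x_{i_1,1}\cdots x_{i_n,n}$ as its initial monomial --- namely the term $t=n$, giving $Q_{i_n}[i_1,\dots,i_{n-1},j]$, whose initial monomial is $x_{i_n,n}y_n \cdot x_{i_1,1}\cdots x_{i_{n-1},n-1} x_{j,n}$, which coincides with the target after the obvious reindexing. (One must verify that this term does not get cancelled and that its coefficient is a unit, which follows from its appearing with coefficient $\pm 1$ in the Laplace expansion.) Iterating, any non-standard product straightens to a $K$-combination of standard monomials whose leading monomial is the same monomial we started with.

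Putting the two inclusions together: the monomial set $\{\init(Q\mu) : Q\mu \text{ standard}\}$ both spans $K[\init(f)\mid f\in B]$ as a $K$-vector space and is linearly independent and contained in the initial algebra of $K[B]$; since it is also contained in, and by the spanning statement equal to, a $K$-basis of the ambient monomial algebra, we get
\[
K[\init(f)\mid f\in B]\ \subseteq\ \init(K[B])\ \subseteq\ K[\init(f)\mid f\in B],
\]
where the first inclusion is the straightening argument and the second is trivial (the initial algebra of any subalgebra generated by $B$ is contained in the algebra generated by the $\init(f)$ once we know $B$ spans via standard monomials). Hence all three coincide and $B$ is a Sagbi basis. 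The step I expect to be the main obstacle --- though it is more bookkeeping than depth --- is the mixed straightening relation: one has to track which of the $n$ terms $Q_{i_t}[i_1,\dots,\hat{i_t},\dots,i_n,j]$ in the Laplace expansion actually carries the product of initial monomials, confirm it is the standard one, and confirm nothing cancels it; the minors-only case can simply be quoted from \cite[Proposition 3.3.4]{BCRV} and \cite[Theorem 4.3]{BrunsVetter}. An alternative, equivalent finish is a Hilbert-function argument: $K[B]$ and $\init(K[B])$ have the same Hilbert function, and the standard monomials already account for the full Hilbert function of $K[\init(f)\mid f\in B]$, forcing equality; this avoids explicitly invoking the Robbiano--Sweedler criterion but uses the same input (distinct standard monomials have distinct initial monomials).
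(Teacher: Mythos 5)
Your proof is correct, and you correctly identify the crucial input from Theorem~\ref{thm:ASL}: the standard monomials form a $K$-basis of $K[B]$ and have pairwise distinct initial monomials, so for any $F\in K[B]$ one gets $\init(F)=\init(g)$ for a single standard monomial $g$, hence $\init(F)\in K[\init(f)\mid f\in B]$. That one-sided argument is exactly the paper's proof, and it is the entire content of the lemma.

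Where you diverge is in the allocation of effort. You label the inclusion $K[\init(f)\mid f\in B]\subseteq\init(K[B])$ as ``the heart of the matter'' and devote the bulk of the argument to verifying, via the mixed straightening relations, that every monomial of $K[\init(f)\mid f\in B]$ arises as $\init$ of a standard monomial. This direction is in fact trivial and needs no straightening: each $f\in B$ lies in $K[B]$, so $\init(f)\in\init(K[B])$, and the initial subalgebra is closed under multiplication (since $\init(g_1g_2)=\init(g_1)\init(g_2)$), so the $K$-algebra it generates is already inside $\init(K[B])$. Conversely, the inclusion you call trivial, $\init(K[B])\subseteq K[\init(f)\mid f\in B]$, is the one that genuinely uses ASL-1 together with the distinct-initials observation. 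Your Laplace-expansion bookkeeping (tracking which term $Q_{i_t}[i_1,\dots,\hat{i_t},\dots,i_n,j]$ carries the leading monomial) is a true and somewhat interesting statement, but it proves the stronger fact that the initial monomials of standard monomials exhaust the monomials of $K[\init(f)\mid f\in B]$, which the lemma does not require. The Hilbert-function alternative you mention is likewise correct but superfluous for the same reason. In short: correct conclusion, correct key ingredient, but a reversed sense of which inclusion is hard, leading to substantial unnecessary work.
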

\begin{proof}
    Let $F\in K[B].$ By Theorem \ref{thm:ASL} (ASL-1), $F$ can be expressed uniquely as a $K$-linear combination of standard monomials. However, as seen in the proof of Theorem \ref{thm:ASL}, every standard monomial has a distinct initial monomial. Consequently, $\init(F)=\init(g)$ for some standard monomial $g$. As $g$ is a product of elements in $B$, $\init(g)\in K[\init(f)\;|\;f\in B].$  
\end{proof}

Let $\init(B) = \{\init(f)\;|\;f \in B\}$. Imitating our partial order on $B$, we define a partial order on $\init(B)$ as follows:
\begin{itemize}
    \item $\init(Q_i) \leq \init(Q_j)$ if $i \leq j$.
    \item $\init(Q_j) \leq \init([i_1 , i_2 , \ldots , i_n])$ if $j \leq i_n$.
    \item $\init([i_1,\dots,i_n])\leq \init([j_1,\dots,j_n])$ if $i_k\leq j_k$ for all $k$.
\end{itemize} 

The key step in establishing the $F$-regularity is:

\begin{lemma}
    The initial subalgebra $\init (K[B])=K[\init(B)]$ is an ASL.
\end{lemma}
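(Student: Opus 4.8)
The plan is to verify the two ASL axioms for $K[\init(B)]$ with the stated partial order on $\init(B)$. By Lemma~\ref{lemma:sagbi}, $K[\init(B)]$ is generated as a $K$-algebra by the monomials $\init(f)$ for $f \in B$, so it suffices to check (ASL-1) and (ASL-2). For (ASL-1), first observe that the assignment $f \mapsto \init(f)$ is an order isomorphism of posets $B \to \init(B)$: it is clearly a bijection, and the three defining inequalities of the order on $\init(B)$ are literally transported from those on $B$. Hence standard monomials in $\init(B)$ correspond bijectively to standard monomials in $B$. Since, as shown in the proof of Theorem~\ref{thm:ASL}, distinct standard monomials in $B$ have distinct initial monomials in $K[X,\underline{y}]$, and a standard monomial in $\init(B)$ is precisely the initial monomial of the corresponding standard monomial in $B$, the standard monomials in $\init(B)$ are distinct monomials in $K[X,\underline{y}]$, hence $K$-linearly independent. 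That they span $K[\init(B)]$ will again follow from (ASL-2), exactly as in Theorem~\ref{thm:ASL}.

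For (ASL-2), take incomparable $\alpha, \beta \in \init(B)$. As before, partition $\init(B)$ into $\init(\mathcal{C}) = \{\init(Q_i)\}$ and $\init(\mathcal{D}) = \{\init([i_1,\dots,i_n])\}$. If $\alpha,\beta \in \init(\mathcal{D})$, then $\alpha\beta$ is a squarefree-in-the-right-sense monomial in the $x_{ij}$, and the needed straightening expression is exactly the one coming from the fact that $K[\init(\mathcal{D})]$ — the initial algebra of the ASL $K[\mathcal{D}]$ of maximal minors — is itself an ASL on the induced poset; this is classical (e.g.\ it is the Hodge algebra / squarefree monomial description underlying \cite[\S4]{BrunsVetter}), and the straightening relations there already satisfy the $\gamma_{i1} < \alpha$, $\gamma_{i1} < \beta$ requirement. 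The one genuinely new case is $\alpha = \init(Q_j) = x_{j,n}y_n \in \init(\mathcal{C})$ and $\beta = \init([i_1,\dots,i_n]) = x_{i_1,1}\cdots x_{i_n,n} \in \init(\mathcal{D})$ with $j > i_n$ (incomparability). Here I would \emph{initialize} the straightening relation found in the proof of Theorem~\ref{thm:ASL}: in that proof $Q_j[i_1,\dots,i_n] = \sum_{t=1}^n \pm\, Q_{i_t}[i_1,\dots,\widehat{i_t},\dots,i_n,j]$. Taking initial terms, $\init(Q_j)\init([i_1,\dots,i_n]) = x_{j,n}y_n\cdot x_{i_1,1}\cdots x_{i_n,n}$ must equal the initial monomial of the right-hand side, i.e.\ $\init\bigl(Q_{i_{t_0}}[i_1,\dots,\widehat{i_{t_0}},\dots,i_n,j]\bigr)$ for whichever index $t_0$ dominates. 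Since each monomial $Q_{i_t}[i_1,\dots,\widehat{i_t},\dots,i_n,j]$ on the right is a standard monomial in $B$, and distinct standard monomials have distinct initials, exactly one term survives in the initial form, giving a \emph{monomial} straightening relation $\init(Q_j)\cdot\init([i_1,\dots,i_n]) = \init(Q_{i_{t_0}})\cdot \init([i_1,\dots,\widehat{i_{t_0}},\dots,i_n,j])$ for a single $t_0 \in \{1,\dots,n\}$ — and one checks directly from the monomials ($x_{j,n}y_n \cdot x_{i_1,1}\cdots x_{i_n,n}$ versus $x_{i_{t_0},n}y_n \cdot x_{i_1,1}\cdots x_{i_{t_0},1}\cdots$, matching $x$-exponents) that in fact $t_0 = n$, so the relation is simply $\init(Q_j)\,\init([i_1,\dots,i_n]) = \init(Q_{i_n})\,\init([i_1,\dots,i_{n-1},j])$. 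The product $\init(Q_{i_n})\init([i_1,\dots,i_{n-1},j])$ is a standard monomial since $i_n \le i_n < j$ wait — we need $i_n \le$ the last index of $[i_1,\dots,i_{n-1},j]$, which is $j$, and indeed $i_n < j$; moreover $\init(Q_{i_n}) < \alpha = \init(Q_j)$ as $i_n < j$, and $\init(Q_{i_n}) < \beta = \init([i_1,\dots,i_n])$ as $i_n \le i_n$. This verifies (ASL-2) in the remaining case.

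I would then conclude: (ASL-1) and (ASL-2) both hold, so $K[\init(B)]$ is an ASL on the poset $\init(B)$, which is order-isomorphic to $B$. The main obstacle — though it turns out to be mild — is keeping the bookkeeping of initial monomials straight in the mixed case $\alpha \in \init(\mathcal{C})$, $\beta \in \init(\mathcal{D})$: one must be sure that the Laplace-expansion relation from Theorem~\ref{thm:ASL} really does degenerate, after taking initial terms, to a single-term monomial relation (rather than, say, collapsing to $0$ or producing a non-standard monomial), which is where the injectivity of $f \mapsto \init(f)$ on standard monomials — already proved for $B$ — does the work. A secondary point of care is invoking the known ASL structure on the initial algebra $K[\init(\mathcal{D})]$ of the maximal-minor ASL; this is standard, but one should cite it precisely (it is implicit in \cite[Theorem 4.3]{BrunsVetter} together with the discriminant/Sagbi description of minors) rather than re-derive it.
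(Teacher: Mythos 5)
Your proof is correct and follows essentially the same strategy as the paper's: prove (ASL-1) via the injectivity of $f\mapsto\init(f)$ on standard monomials (already established in Theorem~\ref{thm:ASL}) together with the Sagbi property, and prove (ASL-2) by initializing the straightening relations of $K[B]$. Your computation that the mixed-case relation collapses to the single monomial identity $\init(Q_j)\,\init([i_1,\dots,i_n])=\init(Q_{i_n})\,\init([i_1,\dots,i_{n-1},j])$ is a correct and informative elaboration of the paper's one-line statement, and the injectivity of initials on standard monomials is indeed what guarantees exactly one term survives. One small suggestion: for the sub-case $\alpha,\beta\in\init(\mathcal{D})$, you appeal to the known ASL structure on the initial algebra of the maximal-minor ring, but the uniform approach you already used for the mixed case works there too --- initialize the straightening relation from $K[\mathcal{D}]\subset K[B]$ --- which keeps the argument self-contained and avoids chasing down the precise citation you flag as a secondary concern.
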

\begin{proof}
    We begin by proving (ASL-1). The standard monomials in $K[\init(B)]$ are the initial monomials of the standard monomials in the ASL $K[B]$. As every standard monomial in $K[B]$ has a distinct initial monomial, we infer that the standard monomials in $K[\init(B)]$ are $K$-linearly independent. 
    
    To prove that the standard monomials form a $K$-spanning set of $K[\init(B)]$, let $q$ be a monomial in $K[\init(B)].$ Then, $q=\init(F)$ for some $F\in K[B]$, and as observed in the proof of Lemma \ref{lemma:sagbi}, $\init(F)=\init(g)$ for some standard monomial $g$ in $K[B].$ Since $\init(g)$ is a standard monomial in $K[\init(B)]$, we have proved (ASL-1).
    
    To prove (ASL-2), let $\alpha$ and $\beta$ be two incomparable elements in $K[\init(B)].$ Then, $\alpha=\init(\alpha')$ and $\beta=\init(\beta')$ for incomparable $\alpha'$ and $\beta'$ in $B$. By initializing the straightening relation on $\alpha'\beta'$, we get a corresponding straightening relation on the product $\alpha\beta.$
\end{proof}
\begin{theorem}\label{thm:initialnormal}
    The initial subalgebra $\init(K[B])=K[\init(B)]$ is normal.
\end{theorem}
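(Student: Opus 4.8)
The plan is to recognize $\init(K[B])=K[\init(B)]$ as the Hibi ring of a finite distributive lattice and then conclude by Hibi's normality theorem.

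First I would record that the straightening relations of the ASL $K[\init(B)]$ furnished by the previous lemma are in fact \emph{monomial}, so that $K[\init(B)]$ is a discrete ASL. For two incomparable minors the relation reads $\init([i_1,\dots,i_n])\cdot\init([j_1,\dots,j_n]) = \init(\mu)\cdot\init(\nu)$, where $\mu$ and $\nu$ are the minors given by the componentwise minimum and maximum of the two index sequences, both sides being equal to $\prod_{k} x_{i_k,k}x_{j_k,k}$; for an incomparable pair $Q_j,[i_1,\dots,i_n]$ (so $j>i_n$), initializing the straightening relation exhibited in the proof of Theorem \ref{thm:ASL} yields $\init(Q_j)\cdot\init([i_1,\dots,i_n]) = \init(Q_{i_n})\cdot\init([i_1,\dots,i_{n-1},j])$.

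Next I would check that the poset $\init(B)$---which is isomorphic to $B$ by construction---is a finite distributive lattice. That it is a lattice is immediate: the meet and join of two minors are their componentwise minimum and maximum; the $Q_i$ form a chain; and for an incomparable pair $Q_j,[i_1,\dots,i_n]$ (i.e.\ $j>i_n$) the meet is $Q_{i_n}$ and the join is $[i_1,\dots,i_{n-1},j]$. For distributivity I would show $\init(B)$ has no sublattice isomorphic to the diamond $M_3$ or the pentagon $N_5$: since $\{Q_1,\dots,Q_m\}$ is a chain such a sublattice contains at most one $Q_i$, and since a minor is never $\leq$ a $Q_i$, a short case check---splitting on how many of the distinguished elements of $M_3$ or $N_5$ lie among the $Q_i$---shows that each possibility forces an impossible coincidence, namely a $Q_i$ equal to a minor, or two a priori distinct minors (or two distinct $Q_i$) being equal; the remaining case, all distinguished elements being minors, is excluded because the minors already form a distributive lattice.

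Granting these two steps, every straightening relation of $K[\init(B)]$ takes the form $\init(\alpha)\init(\beta)=\init(\alpha\wedge\beta)\init(\alpha\vee\beta)$ (trivially so when $\alpha,\beta$ are comparable), which are precisely the defining relations of the Hibi ring $\mathcal{R}_K[\init(B)]$ of the distributive lattice $\init(B)$. Hence there is a natural surjection $\mathcal{R}_K[\init(B)]\twoheadrightarrow K[\init(B)]$; as both are free $K$-modules with basis their standard monomials---the axiom (ASL-1), which the previous lemma supplies for $K[\init(B)]$---and this surjection is a bijection on standard monomials, it is an isomorphism. Since the Hibi ring of a finite distributive lattice is a normal Cohen--Macaulay domain by Hibi's theorem (see, e.g., \cite{BCRV}), it follows that $K[\init(B)]$ is normal. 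I expect the main obstacle to be the distributivity verification in the second step: the mixed meets and joins between the $Q_i$ and the minors have to be handled with some care, whereas everything else is formal once the ASL structure is in hand. (One could instead avoid quoting Hibi's theorem by checking directly that the affine semigroup generated by $\init(B)$ is saturated, but routing the argument through distributive lattices seems cleaner.)
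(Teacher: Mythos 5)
Your argument is correct, but it takes a genuinely different route from the paper's. The paper never mentions distributive lattices: it presents $K[\init(B)]$ as $K[\underline{Y}]/\ker(\phi)$, notes that $\ker(\phi)$ is generated by the quadratic binomials $Y_iY_j - Y_sY_t$ coming from the straightening relations, chooses a graded reverse lexicographic order $\tau$ refining the poset order on the $Y_i$, and observes that these binomials form a Gr\"obner basis of $\ker(\phi)$ with squarefree initial terms $Y_iY_j$; normality then follows from the squarefree initial ideal criterion \cite[Theorem 6.1.13]{BCRV}. Your proof instead identifies $K[\init(B)]$ with the Hibi ring of the distributive lattice $B$ and invokes Hibi's normality theorem. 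This buys a cleaner structural conclusion (a concrete description of $\init(K[B])$ as a Hibi ring, from which normality, Cohen--Macaulayness, and Koszulness all follow at once), but it costs you the verification that $B$ is a distributive lattice, a fact the paper's proof never needs. In the end the two arguments rest on the same underlying combinatorics --- a quadratic squarefree Gr\"obner basis of the toric ideal --- since Hibi's theorem is a special case of the Sturmfels criterion the paper quotes.

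One point worth tightening: the distributivity check is the thinnest part of the writeup. The assertion that a short case analysis ``forces an impossible coincidence'' needs, in addition to the fact that the $Q_i$ form a chain, the observations that no minor is ever $\leq$ a $Q_i$, that the mixed meet $Q_j\wedge[i_1,\dots,i_n]$ (for $j>i_n$) is $Q_{i_n}$ and hence always a $Q$, and that the mixed join is $[i_1,\dots,i_{n-1},j]$ and hence always a minor determined solely by the first $n-1$ indices of the minor factor. With these in hand, the exclusion of $M_3$ and $N_5$ does go through (for $M_3$, at most one of the three middle elements is a $Q_i$, and one ends up needing a $Q$ to coincide with a minor; for $N_5$, each configuration of $Q$'s among the five elements either reduces to the distributive minor lattice, forces two distinct minors or two distinct $Q_i$ to agree, or again forces a $Q$ to equal a minor), but these cases should be spelled out rather than gestured at. Your identification of the initialized straightening relations with the Hibi relations $\init(\alpha)\init(\beta)=\init(\alpha\wedge\beta)\init(\alpha\vee\beta)$ is correct: for two minors it is a direct monomial identity, and for the mixed pair $Q_j$, $[i_1,\dots,i_n]$ it is exactly the $t=n$ term of the $(n+1)\times(n+1)$ determinant expansion in the proof of Theorem \ref{thm:ASL}, which one checks is the term of largest initial monomial.
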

\begin{proof}
    Consider a minimal presentation of the initial subalgebra $K[\init(B)]$ by the polynomial ring $K[Y_i|1\leq i\leq m+\binom{m}{n}]$; that is, $\phi : K[\underline{Y}] \onto K[\init(B)]$. As $K[\init(B)]$ is an ASL, the presentation ideal $\ker(\phi)$ is generated by the straightening relations of $K[\init(B)].$ We claim that there exists a monomial order $\tau$ in $K[\underline{Y}]$ such that $\init_\tau(\ker(\phi))$ is squarefree. Then $K[\init(B)]$ is normal by \cite[Theorem 6.1.13]{BCRV}.
    
    We first consider the partial order induced on the $Y_i$: $Y_i<Y_j$ if $\phi(Y_i)<\phi(Y_j).$ We extend this partial order to a total order on the $Y_i$ and let $\tau$ be the induced graded reverse lexicographic monomial order in $K[\underline{Y}].$ 
    
    Observe that $\ker(\phi)$ is generated by quadratic binomials $Y_iY_j-Y_sY_t,$ where $Y_i$ and $Y_j$ are incomparable, $Y_s\leq Y_t$, $Y_s<Y_i$ and $Y_s<Y_j.$ The initial monomial of this binomial is \[\init_\tau(Y_iY_j-Y_sY_t)=Y_iY_j.\] Thus, every incomparable pair of $\init(B)$ uniquely corresponds to the initial monomial of a generator of $\ker(\phi).$ As the standard monomials in $K[\init(B)]$ are linearly independent, the quadratic binomials $Y_iY_j-Y_sY_t$ induced by the straightening relations form a Gr\"obner basis of $\ker(\phi)$ with respect to $\tau$. The initial terms of these binomials are squarefree. 
\end{proof}

\begin{corollary}\label{cor:F-regular}
Let $K$ be an $F$-finite field of positive characteristic. $K[B]$ is a strongly $F$-regular ring.
\end{corollary}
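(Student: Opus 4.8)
The plan is to transfer strong $F$-regularity from the initial subalgebra back to $K[B]$ along the Sagbi flat family of Lemma \ref{lemma:sagbi}, using crucially that $\init(K[B])$ is a \emph{normal affine semigroup ring}. Concretely: since $B$ is a finite Sagbi basis for the lexicographic order of Theorem \ref{thm:ASL}, a weight vector on the variables $x_{ij},y_i$ refining that order on the finitely many multidegrees occurring among the elements of $B$ yields the associated Rees algebra $\mathcal{R}$ --- a finitely generated $K$-algebra and a domain, flat over $K[t]$, with $t$ a nonzerodivisor, $\mathcal{R}/t\mathcal{R}\cong\init(K[B])=K[\init(B)]$, and $\mathcal{R}/(t-\lambda)\mathcal{R}\cong K[B]$ for every $\lambda\in K^{\times}$ (the $\mathbb{G}_m$-action rescaling $t$ identifies all the fibers over $K^{\times}$). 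Since $K$ is $F$-finite, so are $\mathcal{R}$ and each of its fibers.

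Next I would observe that $K[\init(B)]$ is a normal affine semigroup ring: by construction it is the subalgebra of $K[X,\underline{y}]$ generated by the monomials $\init(f)$, $f\in B$, hence the semigroup ring of the affine semigroup they generate, and it is normal by Theorem \ref{thm:initialnormal}. A normal affine semigroup ring over an $F$-finite field of positive characteristic is strongly $F$-regular (a well-known consequence of the monomial structure of the Frobenius pushforward), and in particular it is $F$-rational.

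The transfer then finishes the proof. The set $\{\lambda\in\mathbb{A}^{1}_{K}\ :\ \mathcal{R}\otimes_{K[t]}\kappa(\lambda)\text{ is }F\text{-rational}\}$ is open by the openness of $F$-rationality along the base of a flat family of excellent $F$-finite rings (see \cite{BCRV} and the references therein), and it contains $\lambda=0$ by the previous paragraph. We may assume $K$ infinite --- replacing $K$ by $\overline{K}$ if $K$ is finite, since $\overline{K}$ is then perfect, hence $F$-finite, and strong $F$-regularity descends along the faithfully flat inclusion $K[B]\into\overline{K}[B]$ --- so a nonempty open subset of $\mathbb{A}^{1}_{K}$ has infinitely many $K$-points; thus the set above contains some $\lambda\neq 0$, whose fiber is $\cong K[B]$. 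Hence $K[B]$ is $F$-rational; since $K[B]$ is Gorenstein by Theorem \ref{thm:Gorenstein}, and a Gorenstein $F$-rational $F$-finite ring is strongly $F$-regular, we conclude.

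The step I expect to be the main obstacle is the transfer itself: having the deformation principle --- openness of $F$-rationality (equivalently, thanks to the Gorenstein property, of strong $F$-regularity) along the base of the Sagbi family --- available in the $F$-finite generality we need, and ensuring that the Rees-algebra formalism underlying Lemma \ref{lemma:sagbi} is invoked in a form that genuinely delivers flatness of $\mathcal{R}$ over $K[t]$ together with the stated fiber identifications. Everything else is either classical (normality $\Rightarrow$ strong $F$-regularity for affine semigroup rings; Gorenstein plus $F$-rational $\Rightarrow$ strongly $F$-regular) or already in hand from Theorems \ref{thm:initialnormal} and \ref{thm:Gorenstein} --- which is exactly why those structural results were proved.
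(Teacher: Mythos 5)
Your overall strategy is the same as the paper's: degenerate $K[B]$ to its initial subalgebra via the Sagbi/Rees family, note that $\init(K[B])$ is a normal affine semigroup ring and hence $F$-rational, transfer $F$-rationality back, and then upgrade to strong $F$-regularity using the Gorenstein property from Theorem \ref{thm:Gorenstein}. The opening and closing steps are correct and match the paper.

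The gap is exactly where you flag it: the transfer. You invoke ``openness of $F$-rationality along the base of a flat family of excellent $F$-finite rings'' and attribute it loosely to \cite{BCRV}, but that is not what the cited source provides, and it is not a standard theorem in the form you need. What \emph{is} available --- and what the paper uses as \cite[Theorem 7.3.11]{BCRV} --- is the local \emph{deformation} theorem: if $t$ is a (homogeneous) nonzerodivisor on the Rees algebra $\mathcal{R}$ and $\mathcal{R}/t\mathcal{R}\cong\init(K[B])$ is $F$-rational, then $\mathcal{R}$ is $F$-rational at the irrelevant maximal ideal, hence (by gradedness and openness of the $F$-rational locus in $\Spec\mathcal{R}$) everywhere; then $\mathcal{R}[t^{-1}]\cong K[B][t,t^{-1}]$ is $F$-rational, and $F$-rationality descends to $K[B]$. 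Note that openness of the $F$-rational locus in the \emph{total space} $\Spec\mathcal{R}$ (V\'elez) is available, but this does not yield openness in the \emph{base} $\mathbb{A}^1$ without properness of $\Spec\mathcal{R}\to\mathbb{A}^1$, which the Sagbi family lacks: the image in $\mathbb{A}^1$ of the closed non-$F$-rational locus need not be closed. Your ``infinitely many $K$-points'' step therefore rests on an unsupported premise. The fix is simply to replace the openness-in-the-base argument with the deformation argument above (or, alternatively, exploit $\mathbb{G}_m$-equivariance together with Chevalley constructibility of the image of the non-$F$-rational locus), after which the rest of your proof goes through.
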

\begin{proof}
    A normal affine semigroup ring is a direct summand of a polynomial ring \cite{Hoc72} and therefore it is $F$-rational. Hence by Theorem \ref{thm:initialnormal}, the initial subalgebra $\init(K[B])$ is $F$-rational. As $F$-rationality deforms, we have that $K[B]$ is $F$-rational \cite[Theorem 7.3.11]{BCRV}. Since $K[B]$ is an $\NN$-graded Gorenstein ring by Theorem \ref{thm:Gorenstein}, $F$-rationality and strong $F$-regularity are equivalent. 
\end{proof}

\section{Lower bound on the arithmetic rank: Topology of the residual intersection}\label{section:Topology}

The `algebraic obstruction' which yields the lower bound for the arithmetic rank in Proposition \ref{thm:nullcone} vanishes in positive characteristic as noted in Lemma \ref{lem:PS}. In this section, we establish the lower bound by studying the topology of the complement of the variety $V(\RI(m,\underline{y}))$. We broadly follow the general strategy of Bruns and Schw\"anzl as in \cite{BrunsSchwanzl}, though many new intricacies arise in our case. 

We begin with setting up the notation. Let $m>n>1$ be positive integers (the cases $m=n$ and $n=1$ will be handled separately). Let $X\colonequals (x_{ij})$ and $\underline{y} \colonequals (y_i)$ be $m\times n$ and $n \times 1$ matrices of indeterminates. The ideal $\RI(m,\underline{y})=I_n(X)+(X\underline{y})$ is the generic $m$-residual intersection of the ideal of variables $(\underline{y})$.
We now begin our topological calculations. Let 
\[U\colonequals (K^{m\times n}\times K^{n\times 1})\setminus V(\RI(m,\underline{y}))=\{(A,B)\in K^{m\times n}\times K^{n\times 1}|AB\neq 0 \text{ or } I_n(A)\neq 0\}.\] 
Set $d \colonequals mn+n$ to be the dimension of $K^{m \times n} \times K^{n \times 1}$. Further, let 
\[U_1 \colonequals (K^{m \times n} \times K^{n \times 1}) \setminus V(X\underline{y}) \; \text{and}\; U_2 \colonequals (K^{m \times n} \times K^{n \times 1}) \setminus  V(I_n(X)),\]
so that $U = U_1 \cup U_2$. Similarly, set 
\[
\begin{aligned}
U_{12} \colonequals U_1 \cap U_2
   &= \{\, (A,B) \in K^{m \times n} \times K^{n \times 1} 
          \mid AB \neq 0,\; \operatorname{rank}(A) = n \,\} \\
   &= \{\, (A,B) \in K^{m \times n} \times K^{n \times 1} 
          \mid B \neq 0,\; \operatorname{rank}(A) = n \,\}.
\end{aligned}
\]

\noindent We define a continuous surjection \[
\begin{array}{c}
\pi : U_{12} \longrightarrow Gr(n,m) \times Gr(1,n) \\[6pt]
(A,B) \longmapsto \bigl(\operatorname{im}(A),\, \operatorname{im}(B)\bigr).
\end{array}
\]
Notice that \[\pi^{-1}(K\langle e_1,\dots,e_n\rangle,K\langle (1,0,\dots,0)\rangle) = \GL_n(K) \times \GL_1(K).\]

A key observation of this section is:

\begin{lemma} \label{lem:LTFB}
    The map $\pi : U_{12} \to \Gr(n,m) \times \Gr(1,n)$ is a locally trivial fiber bundle (in the Zariski topology) with fiber $\GL_n(K) \times \GL_1(K).$
\end{lemma}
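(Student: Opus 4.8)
The plan is to identify $\pi$ as a product of two classical principal bundles and to trivialize each one over the standard affine charts of the relevant Grassmannian. The starting point is that the two conditions cutting out $U_{12}$ involve disjoint sets of variables, so there is a product decomposition
\[
U_{12}\;=\;\{A\in K^{m\times n}:\operatorname{rank}(A)=n\}\ \times\ \bigl(K^{n}\setminus\{0\}\bigr),
\]
under which $\pi=\pi_A\times\pi_B$ with $\pi_A(A)=\operatorname{im}(A)\in\Gr(n,m)$ and $\pi_B(B)=\operatorname{im}(B)\in\Gr(1,n)=\PP^{n-1}$. A product of two Zariski-locally trivial fiber bundles is again one, trivialized over products of trivializing opens, so it suffices to treat $\pi_A$ and $\pi_B$ separately.

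The map $\pi_B$ is the familiar $\GL_1(K)$-torsor $K^{n}\setminus\{0\}\to\PP^{n-1}$: over the standard chart $\mathcal V_j=\{[v]\in\PP^{n-1}:v_j\neq 0\}$ there is the regular section $[v]\mapsto b_{[v]}$ given by rescaling so that the $j$-th coordinate equals $1$, and $B\mapsto([B],B_j)$ defines an isomorphism $\pi_B^{-1}(\mathcal V_j)\xrightarrow{\sim}\mathcal V_j\times K^{\times}$ with inverse $([v],t)\mapsto t\,b_{[v]}$; both directions are morphisms, and $\pi_B^{-1}(\mathcal V_j)$ is exactly where $B_j\neq 0$. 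The charts $\mathcal V_j$, $1\le j\le n$, cover $\PP^{n-1}$.

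The map $\pi_A$ is the \emph{frame bundle} of the tautological rank-$n$ subbundle on $\Gr(n,m)$: a rank-$n$ matrix $A$ is the same as an injection $K^n\hookrightarrow K^m$, i.e.\ an ordered basis of $\operatorname{im}(A)$, so $\pi_A$ is a principal $\GL_n(K)$-bundle (with $\GL_n(K)$ acting by precomposition, $A\mapsto Ag$). Explicitly, for a size-$n$ subset $S\subseteq\{1,\dots,m\}$ let $\mathcal U_S\subseteq\Gr(n,m)$ be the open locus of subspaces $V$ for which the coordinate projection $K^m\to K^S$ restricts to an isomorphism on $V$; the $\mathcal U_S$ cover $\Gr(n,m)$, each is isomorphic to $\AA^{n(m-n)}$, and each carries the regular section $V\mapsto A_V$, where $A_V$ is the unique matrix with column span $V$ whose rows indexed by $S$ form $I_n$. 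On $\pi_A^{-1}(\mathcal U_S)$ the submatrix $A_S$ of the rows indexed by $S$ is invertible, one has $A=A_V\cdot A_S$, and $A\mapsto(\operatorname{im}(A),\,A_S)$ is an isomorphism $\pi_A^{-1}(\mathcal U_S)\xrightarrow{\sim}\mathcal U_S\times\GL_n(K)$ with inverse $(V,g)\mapsto A_V\,g$.

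Assembling the two, $\pi=\pi_A\times\pi_B$ is trivial over each member of the Zariski-open cover $\{\mathcal U_S\times\mathcal V_j\}$ of $\Gr(n,m)\times\Gr(1,n)$, with fiber $\GL_n(K)\times\GL_1(K)$; surjectivity is immediate, since $A_V$ and $b_{[v]}$ realize any prescribed pair of subspaces. I do not expect a genuine obstacle here: the essential content is the well-known Zariski-local triviality of the tautological bundles on Grassmannians, valid over an arbitrary field (indeed any base ring). The remaining work is routine bookkeeping --- checking that the canonical sections $V\mapsto A_V$ and $[v]\mapsto b_{[v]}$ are regular, that the ``extract the distinguished submatrix/coordinate'' maps $A\mapsto A_S$ and $B\mapsto B_j$ take values in the respective general linear groups over the indicated opens, and that the stated trivialization morphisms are mutually inverse.
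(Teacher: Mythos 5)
Correct, and essentially the same argument as the paper's: both trivialize $\pi$ over the standard coordinate charts of the Grassmannians by using the canonical section that normalizes a distinguished $n\times n$ (resp.\ $1\times 1$) submatrix to the identity, and both read off the $\GL_n\times\GL_1$ coordinate as that distinguished submatrix. The only difference is presentational --- you first split $U_{12}$ and $\pi$ into a product of the frame bundle $\pi_A$ and the tautological $\GL_1$-torsor $\pi_B$ and trivialize each separately, while the paper trivializes both factors simultaneously over a single product chart $\mathcal S$.
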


\begin{proof}
Fix bases $\{e_1,\dots,e_m\}$ of $K^m$ and $\{f_1,\dots,f_n\}$ of $K^n.$ Let $\mathcal{S}\subset \Gr(n,m)\times\Gr(1,n)$ be the collection of pairs of subspaces $(V_1,V_2)$ such that \[V_1\cap K\langle e_{n+1},\dots,e_m\rangle =0 \quad \text{and} \quad V_2\cap K\langle f_2,\dots,f_n\rangle=0.\] 

\noindent Given such a pair $(V_1,V_2)$, we choose a basis $\{\alpha_1,\dots,\alpha_n\}$ of $V_1$ such that for all $i$, $\alpha_i=e_i+\alpha_i',$ where $\alpha_i'\in K\langle e_{n+1},\dots,e_m \rangle$. Similarly, we choose a basis $\{\beta\}$ of $V_2$ such that $\beta=f_1+\beta'$, where $\beta'\in K\langle f_2,\dots,f_n \rangle$. Let $A_{V_1}$ and $A_{V_2}$ be the $m\times n$ and $n\times 1$ matrices formed by the columns $\{\alpha_1,\dots,\alpha_n\}$ and $\{\beta\}$ respectively.

\noindent Then 
\[
\begin{array}{rcl}
    \pi^{-1}(\mathcal{S}) & \cong & \mathcal{S} \times (\GL_n(K)\times\GL_1(K)) \\
    (N_1,N_2) & \mapsto & ((\operatorname{im}(N_1),\operatorname{im}(N_2)),([N_1]_{\{1,\dots,n\}},[N_2]_{\{1\}})) \\   
    (A_{V_1}M_1,A_{V_2}M_2) & \mapsfrom & ((V_1,V_2),(M_1,M_2)),
\end{array}
\]
where $[N]_{\mathcal{I}}$ is the submatrix of $N$ formed by the rows indexed by elements of $\mathcal{I}$. Similar computations apply to the open sets in $\Gr(n,m)\times\Gr(1,n)$ formed by the non-vanishing of any other $n$-minor and $1$-minor. These open sets clearly cover $\Gr(n,m)\times\Gr(1,n)$.
\end{proof}

In order to establish the asserted lower bound on the arithmetic rank, we will need the following vanishing theorems. These results follow from affine vanishing and the Mayer--Vietoris sequence for singular and \'etale cohomologies \cite[III.2.24]{Milne}, 

\begin{theorem}\label{thm:vanishing}
\begin{enumerate}[\quad\rm(1)]
    \item If $X$ is a smooth complex variety of algebraic dimension $d$ that admits an open cover by $t$ affines, then 
\[ H^i_{\mathrm{sing}}(X,\mathbb{Q}) = 0 \quad \text{for all} \ \ i > d+t-1. \] 

    \item If $X$ is a smooth variety of algebraic dimension $d$, over an algebraically closed field $K$, that admits an open cover by $t$ affines, then for any $q$ invertible in $K$
    \[
H^i_{\mathrm{\acute{e}t}}(X,\ZZ/q\ZZ)=0 \quad \text{for all} \ \ i>d + t -1.
\]
\end{enumerate}
\end{theorem}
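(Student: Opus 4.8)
The plan is to prove both vanishing statements simultaneously by induction on the number $t$ of affines in the open cover, using the Mayer--Vietoris sequence to reduce the number of charts at each step. The base case $t=1$ is exactly the classical affine vanishing theorem: a smooth affine complex variety of dimension $d$ has the homotopy type of a CW-complex of real dimension at most $d$ (Andreotti--Frankel), so its singular cohomology with $\QQ$ coefficients vanishes above degree $d$; in the \'etale setting, the Artin--Grothendieck affine vanishing theorem \cite[III.2.24, see also the references in]{Milne} gives $H^i_{\et}(X,\ZZ/q\ZZ)=0$ for $i>d$ when $X$ is affine of dimension $d$ and $q$ is invertible. In either case this is precisely the claimed bound $i>d+t-1$ with $t=1$.

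For the inductive step, suppose $X=V_1\cup\cdots\cup V_t$ with each $V_j$ affine and $t\geq 2$. Set $Y=V_1\cup\cdots\cup V_{t-1}$ and $V=V_t$, so $X=Y\cup V$. The key point is that $Y$ is a smooth variety of dimension at most $d$ covered by $t-1$ affines, while $Y\cap V=(V_1\cap V)\cup\cdots\cup(V_{t-1}\cap V)$ is a smooth variety covered by $t-1$ affines (each $V_j\cap V$ is an intersection of two affines inside a separated scheme, hence affine) of dimension at most $d$ as well. The Mayer--Vietoris sequence
\[
\cdots \to H^{i-1}(Y\cap V) \to H^i(X) \to H^i(Y)\oplus H^i(V) \to H^i(Y\cap V)\to\cdots
\]
(for $H^\bullet=H^\bullet_{\mathrm{sing}}(-,\QQ)$, respectively $H^\bullet_{\et}(-,\ZZ/q\ZZ)$; the \'etale Mayer--Vietoris sequence is \cite[III.2.24]{Milne}, and in the analytic case it is standard) then shows that $H^i(X)$ is sandwiched between $H^{i-1}(Y\cap V)$ and $H^i(Y)\oplus H^i(V)$. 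By the inductive hypothesis applied to $Y$ and to $V$ (with $t-1$, respectively $1$, affines) and to $Y\cap V$ (with $t-1$ affines), the outer terms vanish once $i-1 > d+(t-1)-1$ and $i>d+(t-1)-1$ and $i>d+1-1=d$, i.e. once $i>d+t-1$. Hence $H^i(X)=0$ for $i>d+t-1$, completing the induction.

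I would present the two parts in parallel since the only difference is the cohomology theory and the citation for affine vanishing; the formal structure of the Mayer--Vietoris induction is identical. The main obstacle is bookkeeping rather than mathematics: one must be careful that all the varieties appearing in the induction ($Y$, $V$, and all the pairwise intersections) remain smooth and of dimension at most $d$, and that the intersection of two affine open subvarieties of a separated scheme is again affine, so that the subcovers really do consist of affines. Dimension can only drop under passing to open subsets, and smoothness is inherited by open subvarieties and by finite intersections thereof, so these conditions propagate through the induction without trouble; one should simply state them once at the outset. No genuinely hard input beyond the classical affine vanishing theorems and the exactness of Mayer--Vietoris is needed.
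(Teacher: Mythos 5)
Your proof is correct and is exactly the argument the paper has in mind: the paper states only that the result "follows from affine vanishing and the Mayer--Vietoris sequence," and your induction on the number $t$ of affine charts, with Andreotti--Frankel (resp.\ Artin--Grothendieck) as the base case and Mayer--Vietoris driving the inductive step, is the standard way to unwind that remark. The one hypothesis you use tacitly---that varieties are separated, so that the pairwise intersections $V_j\cap V_t$ are again affine---is indeed part of the standing conventions here, so the bookkeeping all goes through as you describe.
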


While \cite{BrunsSchwanzl} determines the arithmetic rank of a determinantal ideal, for our computation, we only need the upper bound, which can be produced more directly as follows:

\begin{lemma}\label{lem:easy}
We have $\ara(I_n(X))\leq mn-n^2+1$ and $\ara(X\underline{y})\leq m$.   
\end{lemma}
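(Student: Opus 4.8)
The plan is to treat the two bounds separately and, in each case, exhibit an explicit list of elements generating the relevant ideal up to radical. For the determinantal bound $\ara(I_n(X)) \le mn - n^2 + 1$, I would invoke the result of Bruns and Schw\"anzl \cite{BrunsSchwanzl}: they show that the ideal $I_n(X)$ of maximal minors of a generic $m \times n$ matrix (with $m \ge n$) has arithmetic rank exactly $mn - n^2 + 1 = (m-n+1)n - n(n-1)$, which is the codimension plus one less than the number of minors, and in particular the upper bound $\ara(I_n(X)) \le mn - n^2 + 1$ is all we need here. Alternatively, one can cite the classical fact (going back to work on set-theoretic generation of determinantal varieties) that the variety $V(I_n(X))$, being the locus where $\rank X < n$, is cut out set-theoretically by $mn - n^2 + 1$ equations, for instance via the explicit generators built from sums of minors along "diagonals." Since this excerpt permits us to assume results stated earlier and cites \cite{BrunsSchwanzl} precisely for this purpose, the cleanest route is simply to quote it.

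For the second bound $\ara(X\underline{y}) \le m$, the key point is that $X\underline{y}$ is the $1 \times 1$ matrix product giving the column vector $(Q_1, \ldots, Q_m)^T$ where $Q_i = \sum_{j=1}^n x_{ij} y_j$, so the ideal $(X\underline{y}) = (Q_1, \ldots, Q_m)$ is already generated by exactly $m$ elements. Hence trivially $\ara(X\underline{y}) \le m$. I would state this one in a single line.

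The only genuine content is thus the determinantal estimate, and there the main obstacle — if one wished to be self-contained — is producing the $mn - n^2 + 1$ set-theoretic generators of $I_n(X)$ explicitly and verifying that their common zero locus is exactly $V(I_n(X))$; this is precisely the nontrivial theorem of \cite{BrunsSchwanzl} (building on the methods of Bruns and others on set-theoretic complete intersections and determinantal ideals), and reproving it is well outside the scope of this lemma. I would therefore simply cite it. In summary, the proof is: the bound $\ara(X\underline{y}) \le m$ is immediate since $(X\underline{y})$ has $m$ generators, and the bound $\ara(I_n(X)) \le mn - n^2 + 1$ is \cite[the relevant theorem]{BrunsSchwanzl}.
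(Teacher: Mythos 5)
Your proof is correct, but it diverges from the paper's on the determinantal bound. The paper explicitly \emph{avoids} invoking the full theorem of \cite{BrunsSchwanzl}, noting just before the lemma that ``for our computation, we only need the upper bound, which can be produced more directly.'' Instead, it uses the inclusion of the Pl\"ucker ring $K[\text{all $n\times n$ minors of } X] \hookrightarrow K[X]$: a homogeneous system of parameters for the Pl\"ucker ring generates its homogeneous maximal ideal up to radical, and pushing this into $K[X]$ shows $\ara(I_n(X))$ is at most the Krull dimension of the Pl\"ucker ring, namely $mn-n^2+1$. This is exactly the same trick already used in Proposition \ref{thm:nullcone} for the invariant ring, so the argument is self-contained, characteristic-free, and thematically parallel to the rest of the paper; by contrast, citing \cite{BrunsSchwanzl} imports a considerably stronger statement (the exact arithmetic rank, whose lower-bound half needs \'etale cohomology) when only the easy half is required. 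Both routes are valid, and your treatment of $\ara(X\underline{y}) \le m$ matches the paper's one-line observation.
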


\begin{proof}
The second assertion is obvious. For the first assertion, we have the inclusion of the Pl\"{u}cker ring 
\[K[\,\text{all $n \times n$ minors of } X\,] \into K[X].
 \]
 As in the proof of Proposition \ref{thm:nullcone}, a homogeneous system of parameters of the Pl\"{u}cker ring generates its homogeneous maximal ideal up to radical. Viewing this in $K[X]$, we get $\ara(I_n(X))$ is at most the dimension of the Pl\"{u}cker ring, which is $mn-n^2+1$.
\end{proof}

\begin{theorem}\label{thm:singular}
    Let $K=\CC$ and $d=mn+n$, the algebraic dimension of $U$. Then
    $$H^{d+(mn-n^2+n)}_{\mathrm{sing}}(U,\QQ)=\QQ.$$
\end{theorem}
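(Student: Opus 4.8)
The goal is to compute the top-nonvanishing singular cohomology of $U = U_1 \cup U_2$ and show that $H^{d+(mn-n^2+n)}_{\mathrm{sing}}(U,\QQ) = \QQ$, where $d = mn+n$. The natural tool is the Mayer--Vietoris sequence
\[
\cdots \to H^{i-1}_{\mathrm{sing}}(U_{12},\QQ) \to H^i_{\mathrm{sing}}(U,\QQ) \to H^i_{\mathrm{sing}}(U_1,\QQ)\oplus H^i_{\mathrm{sing}}(U_2,\QQ) \to H^i_{\mathrm{sing}}(U_{12},\QQ) \to \cdots
\]
so the first task is to understand the cohomology of $U_1$, $U_2$, and especially $U_{12}$ in the relevant range of degrees. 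The plan is to show that in the critical degree $i = d + (mn-n^2+n)$ the groups $H^i(U_1)$ and $H^i(U_2)$ vanish, so that $H^i(U) \cong H^{i-1}(U_{12})$, and then to compute $H^{i-1}(U_{12})$ and show it equals $\QQ$.

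**Handling $U_1$ and $U_2$.** The set $U_1 = (K^{m\times n}\times K^{n\times 1})\setminus V(X\underline y)$ is the complement of the hypersurface defined by the $m$ entries of $X\underline y$; it is covered by the $m$ basic affine opens $\{Q_i \neq 0\}$, so by Theorem \ref{thm:vanishing}(1) its cohomology vanishes above degree $d + m - 1$. Since $m \le mn - n^2 + n$ when $n \ge 2$ (indeed $mn-n^2+n - m = (m-n)(n-1)\ge 0$), and strictly $m < d+(mn-n^2+n)$, the group $H^i(U_1)$ vanishes at $i = d+(mn-n^2+n)$. For $U_2 = (K^{m\times n}\times K^{n\times 1})\setminus V(I_n(X))$: by Lemma \ref{lem:easy}, $V(I_n(X))$ is covered set-theoretically by $mn-n^2+1$ equations, hence $U_2$ is covered by $mn-n^2+1$ basic affine opens, and Theorem \ref{thm:vanishing}(1) gives vanishing above degree $d + (mn-n^2+1) - 1 = d + mn - n^2$. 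Since $mn - n^2 < mn-n^2+n$, again $H^i(U_2) = 0$ at $i = d + (mn-n^2+n)$. Consequently the Mayer--Vietoris connecting map gives an isomorphism $H^{d+(mn-n^2+n)}_{\mathrm{sing}}(U,\QQ)\cong H^{d+(mn-n^2+n)-1}_{\mathrm{sing}}(U_{12},\QQ)$, provided we also check that the term $H^{d+(mn-n^2+n)-1}(U_1)\oplus H^{d+(mn-n^2+n)-1}(U_2)$ does not obstruct surjectivity — but that requires only vanishing of $H^{i-1}(U_1)\oplus H^{i-1}(U_2)$ at one degree lower, which follows from the same affine-cover bounds as long as the numerology still works (here $d+(mn-n^2+n)-1 > d+(mn-n^2+1)-1$ iff $n > 1$, so it does).

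**Computing $H^\bullet(U_{12})$.** By Lemma \ref{lem:LTFB}, $\pi\colon U_{12}\to \Gr(n,m)\times\Gr(1,n)$ is a Zariski-locally-trivial fiber bundle with fiber $\GL_n(\CC)\times\GL_1(\CC)$. Over $\CC$ this is in particular a fiber bundle in the analytic topology, so one can run the Leray--Hirsch theorem or the Serre spectral sequence $H^p(\Gr(n,m)\times\Gr(1,n); H^q(\GL_n\times\GL_1;\QQ)) \Rightarrow H^{p+q}(U_{12};\QQ)$. The base has complex dimension $n(m-n) + (n-1)$, hence real dimension $2n(m-n)+2(n-1)$ and top cohomology in that degree; the fiber $\GL_n(\CC)\times\GL_1(\CC) \simeq \GL_n(\CC)\times\CC^\times$ has the rational homotopy type of a product of odd spheres $S^1\times S^3\times\cdots\times S^{2n-1}$ times $S^1$, with top cohomology in degree $(1+3+\cdots+(2n-1)) + 1 = n^2+1$. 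The top total degree with nonzero cohomology is therefore $2n(m-n)+2(n-1) + n^2 + 1$, and I would verify this equals $d + (mn-n^2+n) - 1 = (mn+n) + (mn-n^2+n) - 1 = 2mn - n^2 + 2n - 1$; indeed $2n(m-n)+2(n-1)+n^2+1 = 2mn - 2n^2 + 2n - 2 + n^2 + 1 = 2mn - n^2 + 2n - 1$, which matches. In that top degree the spectral sequence gives $H^{\mathrm{top}}(U_{12})\cong H^{\mathrm{top}}(\mathrm{base})\otimes H^{\mathrm{top}}(\mathrm{fiber})$, a one-dimensional $\QQ$-vector space — provided the spectral sequence degenerates enough at the top corner, which it does because the top class on the base and the top class on the fiber are the unique contributions in their respective maximal degrees and there is nothing for the differentials to hit or come from.

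**Main obstacle.** The delicate point is not the individual computations but controlling the Mayer--Vietoris sequence precisely at the critical degree: one must be sure that the relevant map $H^{i-1}(U_{12})\to H^i(U)$ is an isomorphism and not merely a surjection or injection, which requires pinning down the behavior of the neighboring terms $H^{i-1}(U_1)\oplus H^{i-1}(U_2)$ and $H^i(U_1)\oplus H^i(U_2)$ — and, more subtly, that the map $H^{i-1}(U_1)\oplus H^{i-1}(U_2) \to H^{i-1}(U_{12})$ is zero in the degree under consideration, so that the connecting homomorphism is actually onto the full one-dimensional group. I expect this to come down to checking that $i-1$ is already above the affine-cover vanishing bounds for $U_1$ and $U_2$, which the arithmetic above confirms for $n>1$; the genuinely nontrivial ingredient is the fiber-bundle computation of $H^\bullet(U_{12})$ via Leray--Hirsch, where the hard work is verifying that the cohomology of $\GL_n(\CC)$ behaves as an exterior algebra over $\QQ$ and that the bundle is cohomologically simple (e.g.\ orientable, trivial monodromy on rational cohomology), which holds because $\GL_n(\CC)$ is connected.
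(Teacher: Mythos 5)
Your proposal is correct and follows essentially the same route as the paper: affine-cover vanishing for $U_1$ and $U_2$ via Lemma~\ref{lem:easy} and Theorem~\ref{thm:vanishing}, the Leray spectral sequence for $\pi\colon U_{12}\to\Gr(n,m)\times\Gr(1,n)$ to locate the top nonvanishing cohomology of $U_{12}$, and Mayer--Vietoris to transfer this to $U$. One small correction: the triviality of the local coefficient system in the spectral sequence is because the base $\Gr(n,m)\times\Gr(1,n)$ is \emph{simply connected}, not because the fiber $\GL_n(\CC)\times\GL_1(\CC)$ is connected (connectedness of the fiber alone does not rule out monodromy); and the vanishing of $H^{N-1}_{\mathrm{sing}}(U_1,\QQ)$ in the degree $N-1 = d+(mn-n^2+n)-1$ needs $m>n$ in addition to $n>1$, which is part of the standing hypothesis of this section.
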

\begin{proof}
Using Lemma \ref{lem:easy}, the statement of Theorem \ref{thm:vanishing} (1) in our case is 
\begin{equation}\label{eqn:vanishingU}
   H^i_{\text{sing}}(U_1,\mathbb{Q}) = 0 \ \text{for all} \ i > d+m-1, 
\end{equation}
\begin{equation}\label{eqn:vanishingV}
  H^i_{\text{sing}}(U_2,\mathbb{Q}) = 0 \ \text{for all} \ i > d+mn-n^2.
\end{equation}

Since $\Gr(n,m) \times \Gr(1,n)$ is simply connected, the Leray spectral sequence for the fibration in Lemma \ref{lem:LTFB} takes the simple
form
\[ E^{u,v}_2 = H_{\text{sing}}^{u}(\Gr(n,m) \times \Gr(1,n),H_{\text{sing}}^{v}(\GL_n(K) \times \GL_1(K),\mathbb{Q})) \Longrightarrow H_{\text{sing}}^{u+v}(U_{12},\mathbb{Q}).\]

Let $D \colonequals \dim(\Gr(n,m)\times \Gr(1,n))=n(m-n)+(n-1)=mn-n^2+n-1$ and note that $\dim(\GL_n(K)\times \GL_1(K))=n^2+1.$ Since 
\[H^{2D}_{\text{sing}}(\Gr(n,m) \times \Gr(1,n),\mathbb{Q}) \cong \mathbb{Q} \quad \text{and}\quad H^{n^2 + 1}_{\text{sing}}(\GL_n(K) \times \GL_1(K),\mathbb{Q}) \cong \mathbb{Q},\]
and the higher cohomology groups vanish, we get that 
\begin{align*}
     H^{2D + n^2 + 1}_{\text{sing}}(U_{12},\mathbb{Q}) \cong E^{2D,n^2 +1}_{\infty} &= E^{2D,n^2 +1}_{2} \\ &\cong H^{2D}_{\text{sing}}(\Gr(n,m) \times \Gr(1,n),H^{n^2 + 1}_{\text{sing}}(\GL_n(K) \times \GL_1(K),\mathbb{Q}))\\
     &\cong H^{2D}_{\text{sing}}(\Gr(n,m) \times \Gr(1,n),\mathbb{Q})\\ &\cong \mathbb{Q}.
\end{align*} 

Let $N \colonequals d + (mn + n - n^2)$ then $2D + n^2 + 1 = d+(mn+n-n^2-1) =N-1$. We have \[ H^{i}_{\text{sing}}(U_{12},\mathbb{Q}) \neq 0 \ \text{for} \ i = N-1. \] Since $m>n>1$, we have $N-1>d+m-1$ and $N-1>d+(mn-n^2+1)$. Hence, by Equations \ref{eqn:vanishingU} and \ref{eqn:vanishingV}, we get \[ H^{i}_{\text{sing}}(U_1,\mathbb{Q}) = 0 \ \text{and} \ H^{i}_{\text{sing}}(U_2,\mathbb{Q}) = 0 \ \text{for} \ i \geq N-1. \]

The Mayer--Vietoris sequence of singular cohomology
\[\to H^{i}_{\text{sing}}(U,\mathbb{Q}) \to H^{i}_{\text{sing}}(U_1,\mathbb{Q}) \oplus H^{i}_{\text{sing}}(U_2,\mathbb{Q}) \to H^{i}_{\text{sing}}(U_{12},\mathbb{Q}) \to H^{i+1}_{\text{sing}}(U,\mathbb{Q}) \to\] 
gives us \[ H^{N-1}_{\text{sing}}(U_{12},\mathbb{Q}) \cong H^{N}_{\text{sing}}(U,\mathbb{Q}) \cong \mathbb{Q}. \qedhere\] 
\end{proof}

\begin{remark}
    The required lower bound for the arithmetic rank of the residual intersection $\RI(m,\underline{y})$ follows immediately from the above calculation. By Theorem \ref{thm:vanishing} (1), we have \[ H^{i}_{\text{sing}}(U,\mathbb{Q}) = 0 \ \text{for all }\ i>d+\ara(\RI(m,\underline{y}))-1. \] 
    By Theorem \ref{thm:singular}, this gives $N\leq d+\ara(\RI(m,\underline{y}))-1$. Therefore 
    \[\ara(\RI(m,\underline{y}))\geq n(m-n+1)+1\] 
\end{remark}

The above calculation works analogously in positive characteristic: 

\begin{theorem}\label{thm:etale}
    Let $K$ be an algebraically closed field of characteristic $p>0$ and let $m$, $n$, $U$ and $d$ be defined as above. Then \[H^{d+(mn-n^2+n)}_{\mathrm{\acute{e}t}}(U,\ZZ/q\ZZ)=\ZZ/q\ZZ.,\] where $q$ is a prime integer other than $p$. 
\end{theorem}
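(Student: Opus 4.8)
The plan is to mirror the proof of Theorem~\ref{thm:singular} verbatim, replacing singular cohomology over $\QQ$ with \'etale cohomology with coefficients in $\ZZ/q\ZZ$ for $q$ a prime distinct from $p$, and invoking part~(2) of Theorem~\ref{thm:vanishing} in place of part~(1). First I would record the two affine-covering vanishing statements: by Lemma~\ref{lem:easy} the open set $U_1$ is a union of $m$ basic affine opens and $U_2$ is a union of $mn-n^2+1$ basic affine opens, so Theorem~\ref{thm:vanishing}(2) gives $H^i_{\et}(U_1,\ZZ/q\ZZ)=0$ for $i>d+m-1$ and $H^i_{\et}(U_2,\ZZ/q\ZZ)=0$ for $i>d+mn-n^2$. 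These are the exact analogues of Equations~\eqref{eqn:vanishingU} and~\eqref{eqn:vanishingV}.

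Next I would run the Leray spectral sequence for the locally trivial fiber bundle $\pi\colon U_{12}\to\Gr(n,m)\times\Gr(1,n)$ of Lemma~\ref{lem:LTFB}. The base is simply connected, and its \'etale cohomology ring with $\ZZ/q\ZZ$ coefficients has the same Betti numbers as its singular cohomology (both are products of Grassmannians, whose \'etale cohomology is well known to agree with the classical one, with top nonvanishing group $H^{2D}_{\et}\cong\ZZ/q\ZZ$ in degree $2D$, $D=mn-n^2+n-1$); similarly $\GL_n\times\GL_1$ over an algebraically closed field has $\ell$-adic cohomology matching that of the complex group, with top group $H^{n^2+1}_{\et}\cong\ZZ/q\ZZ$. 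Here it is cleanest to cite a comparison result for \'etale cohomology of $\GL_n$ (or to compute directly from the cell structure / Bruhat decomposition of the Grassmannians and the fibration $\GL_n\to\GL_n/\text{(parabolic)}$), since $q$ is invertible. As in Theorem~\ref{thm:singular}, the corner term survives, $E_\infty^{2D,n^2+1}=E_2^{2D,n^2+1}\cong\ZZ/q\ZZ$, giving $H^{N-1}_{\et}(U_{12},\ZZ/q\ZZ)\cong\ZZ/q\ZZ$ where $N\colonequals d+(mn-n^2+n)$ and $N-1=2D+n^2+1$.

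Finally I would feed this into the Mayer--Vietoris sequence for \'etale cohomology \cite[III.2.24]{Milne} associated to $U=U_1\cup U_2$. Since $m>n>1$ one checks $N-1>d+m-1$ and $N-1>d+(mn-n^2+1)$ exactly as before, so $H^i_{\et}(U_1,\ZZ/q\ZZ)=H^i_{\et}(U_2,\ZZ/q\ZZ)=0$ for $i\ge N-1$, and the Mayer--Vietoris sequence yields the isomorphism $H^{N-1}_{\et}(U_{12},\ZZ/q\ZZ)\cong H^N_{\et}(U,\ZZ/q\ZZ)\cong\ZZ/q\ZZ$, which is the claim.

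The main obstacle is purely one of citing the right inputs: unlike the complex-analytic setting, the \'etale cohomology of $\GL_n$, of the Grassmannians, and their behavior under a Zariski-locally-trivial fiber bundle all need to be justified over an arbitrary algebraically closed field. The Leray spectral sequence for \'etale cohomology of a Zariski-locally-trivial fibration is standard, and the degeneration argument uses only that the base is simply connected (so the local system $R^v\pi_*(\ZZ/q\ZZ)$ is constant) together with the explicit top cohomology of base and fiber; once the cohomology of $\GL_n$ and of the Grassmannians with $\ZZ/q\ZZ$ coefficients is in hand, the spectral-sequence and Mayer--Vietoris steps are formally identical to the proof of Theorem~\ref{thm:singular}, so no new geometric idea is required.
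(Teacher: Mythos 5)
Your proposal is correct and follows the same route as the paper, which likewise just notes that $\Gr(n,m)\times\Gr(1,n)$ has trivial \'etale fundamental group and then asserts that the proof of Theorem~\ref{thm:singular} carries over verbatim. You supply more of the detail that the paper leaves implicit (the \'etale cohomology of the Grassmannians and of $\GL_n$ with $\ZZ/q\ZZ$ coefficients matching the classical computation, and the Leray spectral sequence for a Zariski-locally-trivial bundle in the \'etale setting), but the argument is the same.
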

\begin{proof}
    The Grassmann variety is simply connected in the \'etale topology as well. So the \'etale fundamental group  
    \[\pi_{1,\text{\'et}}(\Gr(n,m)\times\Gr(1,n)) = \pi_{1,\text{\'et}}(\Gr(n,m))\times \pi_{1,\text{\'et}}(\Gr(1,n))=0.\] Given this fact, the proof of Theorem \ref{thm:singular} goes through verbatim.
\end{proof}

We are now ready to prove the Main Theorem of this paper.

\begin{theorem}\label{thm:arageneric}
    Let $m$ and $n$ be positive integers with $m\geq n$ and $K$ is a field or the integers. The arithmetic rank of the ideal $\RI(m,\underline{y})$ in $K[X,\underline{y}]$ is 
    \[
\ara(\RI(m,\underline{y})) =\begin{cases} 
n(m-n+1)+1 &   n \ge 2, \\
m & n=1.
\end{cases}
\]
\end{theorem}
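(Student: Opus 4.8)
The plan is to assemble the bounds already proved in the paper, handling the generic case $m > n > 1$ first and then the two boundary cases $n = 1$ and $m = n$ separately. For the main case $m > n > 1$: the upper bound $\ara(\RI(m,\underline{y})) \le n(m-n+1)+1$ follows from Theorem \ref{thm:ASLdimension}, since the auxiliary ring $K[B]$ has dimension $n(m-n+1)+1$ and its homogeneous system of parameters (described explicitly there via Proposition \ref{prop:SOPofASL}) generates the homogeneous maximal ideal of $K[B]$ up to radical; viewing these parameters inside $K[X,\underline{y}]$ and using that $\RI(m,\underline{y}) = (B)K[X,\underline{y}]$ (the nullcone identification from Section \ref{section:InvariantTheory}, valid since $K$ is infinite after harmlessly passing to a larger field, or directly from the explicit generators $I_n(X) + (X\underline{y})$), we get the upper bound independently of characteristic. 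For the matching lower bound, I would split on the characteristic of $K$: in characteristic zero it is Proposition \ref{thm:nullcone} together with the dimension count, while in positive characteristic it comes from Theorem \ref{thm:etale} combined with Theorem \ref{thm:vanishing}(2) exactly as in the Remark following Theorem \ref{thm:singular} — if $U$ were covered by $t = \ara(\RI(m,\underline{y}))$ affines then $H^i_{\text{\'et}}(U, \ZZ/q\ZZ) = 0$ for all $i > d + t - 1$, yet the nonvanishing in degree $d + (mn - n^2 + n)$ forces $t \ge n(m-n+1)+1$.

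Next I would dispose of the case $n = 1$. Here $I_n(X) = I_1(X) = (x_{11}, \ldots, x_{m1})$ is the ideal of all entries of the $m \times 1$ matrix $X$, and $X\underline{y} = (x_{11}y_1, \ldots, x_{m1}y_1) \subseteq (x_{11}, \ldots, x_{m1})$, so $\RI(m,\underline{y}) = (x_{11}, \ldots, x_{m1})$, a complete intersection of height $m$ generated by $m$ elements. Thus $\ara = m$, matching the claimed formula; note this is the one case where $\RI(m,\underline{y})$ \emph{is} a set-theoretic complete intersection, consistent with the discussion in the introduction and with the fact that the formula $n(m-n+1)+1$ would incorrectly give $1$ here.

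Then the boundary case $m = n$ with $n \ge 2$. Now $X$ is square, $I_n(X) = (\det X)$ is principal, and $\RI(n, \underline{y}) = (\det X) + (X\underline{y})$, with $n(m-n+1)+1 = n+1$. The upper bound $\ara \le n+1$ is immediate from subadditivity: $\ara(X\underline{y}) \le n$ and $\ara(\det X) = 1$. For the lower bound I expect the cleanest route is to observe that the topological argument of Section \ref{section:Topology} still applies with only cosmetic changes — with $m = n$ we have $\Gr(n,m) = \Gr(n,n)$ a point, so $U_2$ is the complement of the single hypersurface $\{\det X = 0\}$ and $U_{12}$ fibers over $\Gr(1,n)$ with fiber $\GL_n(K) \times \GL_1(K)$; rerunning the Leray/Mayer--Vietoris computation with $D = n - 1$ gives a nonvanishing cohomology class in degree $d + n = d + (mn - n^2 + n)$, which via Theorem \ref{thm:vanishing} forces $\ara \ge n+1$. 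Alternatively, in characteristic zero one can invoke Proposition \ref{thm:nullcone} directly, since that argument never used $m > n$.

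The main obstacle I anticipate is verifying that the hypotheses of the two cohomological vanishing theorems and of the fiber bundle lemma genuinely survive the degenerate geometry of the boundary case $m = n$ (and, to a lesser extent, checking that the nullcone identification $\RI(m,\underline{y}) = (B)S$ and the splitting argument of Proposition \ref{thm:nullcone} do not secretly require $m > n$). In particular, when $m = n$ the variety $V(I_n(X))$ is a hypersurface rather than a positive-codimension determinantal variety, so one must confirm that $U_2$ is still a smooth variety covered by the right number of affines (it is the complement of an irreducible hypersurface, hence a single affine open, so $t = 1$ works for $U_2$) and that the dimension bookkeeping $2D + n^2 + 1 = N - 1$ with the new $D = n-1$ still lands in the range where the Mayer--Vietoris connecting map is an isomorphism. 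Everything else is an assembly of results already in hand, together with the two elementary direct computations for $n = 1$ and the principal-minor simplification for $m = n$.
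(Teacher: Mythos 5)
Your case decomposition ($n=1$; characteristic $0$ with $m\geq n$; positive characteristic with $m>n$; positive characteristic with $m=n$) matches the paper's, and your treatment of the first three of these is essentially the paper's proof: the $n=1$ case is a trivial ideal of variables, the upper bound always comes from Theorem~\ref{thm:ASLdimension}, the characteristic-zero lower bound comes from Proposition~\ref{thm:nullcone}, and the positive-characteristic $m>n$ lower bound comes from Theorems~\ref{thm:etale} and~\ref{thm:vanishing}.

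The gap is in the positive-characteristic $m=n$ case, which is exactly the one you yourself flag as needing ``confirmation.'' You propose that the Leray/Mayer--Vietoris computation of Section~\ref{section:Topology} extends with only cosmetic changes, but the relevant numerical hypothesis in fact fails at the boundary. In Theorem~\ref{thm:singular} the Mayer--Vietoris isomorphism $H^{N-1}(U_{12})\cong H^{N}(U)$ with $N=d+(mn-n^2+n)$ requires the vanishing $H^{N-1}(U_1)=0$, and the only vanishing statement available (Theorem~\ref{thm:vanishing}, via the cover of $U_1$ by $m$ affines from Lemma~\ref{lem:easy}) gives $H^i(U_1)=0$ for $i>d+m-1$. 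The inequality $N-1>d+m-1$ is equivalent to $(m-n)(n-1)>0$, which is \emph{false} when $m=n$: one has $N-1=d+m-1$ exactly, so the affine-cover bound says nothing about $H^{N-1}(U_1)$, and the connecting map is not automatically injective. Showing $H^{N-1}(U_1)=0$ for $m=n$ is possible (for instance, one can fiber $U_1$ over $K^n\setminus\{0\}$ with fiber $K^{n(n-1)}\times(K^n\setminus\{0\})$ and find that the top cohomology of $U_1$ sits in degree $4n-2<n^2+2n-1=N-1$ for $n\geq 2$), but that is a genuinely new computation, not covered by anything in the paper, and you do not supply it.

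The paper avoids this issue entirely: for $m=n\geq 2$ in positive characteristic it specializes $X$ to a block matrix concentrating the interesting part in the upper-left $2\times 2$ corner (identity below), which sends $\RI(m,\underline{y})$ to $\RI(2,(y_1,y_2))+(y_3,\dots,y_m)$. The ideal $\RI(2,(y_1,y_2))$ is a $2\times 3$ generic determinantal ideal whose arithmetic rank, and top nonvanishing \'etale cohomology of the complement, were computed by Bruns--Schw\"anzl, and a short Mayer--Vietoris argument in the specialized (much lower-dimensional) setting then gives the lower bound $m+1$. Since arithmetic rank only drops under specialization, this transfers back. This is a fundamentally different mechanism from rerunning the Grassmannian fibration argument.

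Two smaller points. First, your proposal does not address the case $K=\ZZ$, which the theorem statement includes; the paper disposes of it via Remark~\ref{rmk:Z} using the ASL structure over $\ZZ$. Second, for the upper bound in the $m=n$ case you invoke subadditivity ($\ara(X\underline y)\le n$ plus $\ara(\det X)=1$), which does give $n+1$; the paper instead uses the uniform ASL dimension bound from Theorem~\ref{thm:ASLdimension}, but both are correct.
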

\begin{proof}
    The case $n=1$ is immediate since $\RI(m,y_1)$ is an ideal of $m$ variables; we assume $n>1$ from now on. If $K$ is the integers, the assertion is noted in Remark \ref{rmk:Z}. If $K$ is a field of characteristic zero, the result follows from Proposition \ref{thm:nullcone} and Theorem \ref{thm:ASLdimension}. If $K$ has characteristic $p>0$ and $m>n$, the result follows from Theorems \ref{thm:ASLdimension} and \ref{thm:etale}. Note that a lower bound for the arithmetic rank in $\overline{K}[X,\underline{y}]$ is also a lower bound in $K[X,\underline{y}]$, where $\overline{K}$ is the algebraic closure of $K$. It remains to prove the assertion over fields of characteristic $p>0$ when $m$ equals $n$. We work with this assumption for the remainder of the proof. 

    By Theorem \ref{thm:ASLdimension}, we already have that $\ara(RI(m,\underline{y}))\leq n(m-n+1)+1=m+1.$ To prove a lower bound for the arithmetic rank, we may assume that $K$ is algebraically closed. We first consider the case $m=2$. Observe that 
    $$\RI(2,(y_1,y_2))=I_2 \begin{pmatrix}
        x_{11} & x_{21} & -y_2\\
        x_{12} & x_{22} & y_1\\
    \end{pmatrix}$$ is a determinantal ideal. Let $q$ be a prime other than $p$ and let $U(2)$ be the affine open set $K^6\setminus V(\RI(2,\underline{y}))$. Bruns and Schw\"anzl \cite{BrunsSchwanzl} proved that the arithmetic rank of $\RI(2,(y_1,y_2))$ is $3$ by showing that $$H^8_{\acute{e}t}(U(2),\mathbb{Z}/q\mathbb{Z})\neq 0.$$ This settles the case $m=2$. Since the arithmetic rank is exactly 3, it is also known that all higher \'etale cohomologies vanish by Theorem \ref{thm:vanishing} (2). 

    Next, suppose that $m>2$. Specialize the matrix $X$ to  
    \[X' \colonequals\begin{bmatrix}
        x_{11} & x_{12} & 0 & \cdots & 0\\
        x_{21} & x_{22} & 0 & \cdots & 0\\
        0 & 0 & 1 &\cdots & 0\\
        \vdots & \vdots & \vdots & \ddots & \vdots\\
        0 & 0 & 0 & \cdots & 1\\
    \end{bmatrix}.
    \] Since arithmetic rank can only decrease on specialization, it suffices to show that the ideal $J$ obtained from $\RI(m,\underline{y})$ after the above specialization has arithmetic rank bounded below by $n(m-n+1)+1=m+1.$ Observe that 
    \[J=\RI(2,(y_1,y_2))+(y_3,y_4,\dots,y_m).\] 
    Let $U(m)\colonequals K^{m+4}\setminus V(J)$ and note that $U(m)=U_1\cup U_2$, where $$U_1\colonequals K^{m+4}\setminus V(\RI(2,(y_1,y_2)))\cong U(2)\times K^{m-2} \quad \text{and}$$ $$U_2\colonequals K^{m+4}\setminus V(y_3,\dots,y_m)\cong K^6\times (K^*)^{m-2}.$$ It follows that the highest nonvanishing \'etale cohomologies of $U_1$ and $U_2$, with coefficients in $\ZZ/q\ZZ$, are at the indices $8$ and $(m-2)+(m-2)-1=2m-5$ respectively. 
    
    Next, consider their intersection $U_{12}:=U_1\cap U_2\cong U(2)\times (K^*)^{m-2}.$ The highest nonvanishing \'etale cohomology of $U_{12}$ is at the index $8+(2m-5)=2m+3.$ By the Mayer--Vietoris sequence on \'etale cohomology, we get $$H_{\acute{e}t}^{2m+4}(U(m),\ZZ/q\ZZ)\neq 0.$$ Therefore, by Theorem \ref{thm:vanishing} (2), we get $\ara(J)\geq m+1$, as required.
\end{proof}


\begin{proposition}[{\cite{HartshorneFlat}}] \label{thm:Hartshorneflat}
    Let $A$ be a Noetherian local ring containing a field $K$ and let $f_1,\dots,f_n$ be a regular sequence in $A$. Then the natural map of $K$-algebras $$\phi:K[X_1,\dots,X_n]\to A,$$ which sends $X_i$ to $f_i$ for each $i$, is injective, and $A$ is flat as a $K[X_1,\dots,X_n]$-module.
\end{proposition}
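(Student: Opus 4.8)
The plan is to reduce everything to the generic case $A = K[X_1,\dots,X_n]$ itself and then bootstrap to an arbitrary Noetherian local $A$ via a filtration/associated-graded argument. First I would observe that the statement is really two assertions: injectivity of $\phi$, and flatness of $A$ over the polynomial subring $R \colonequals \phi(K[\underline{X}]) \cong K[f_1,\dots,f_n]$. Injectivity is the easier half: if $g(\underline{X})$ were a nonzero polynomial with $g(\underline{f}) = 0$, write $g = \sum g_d$ as a sum of homogeneous components, let $g_e$ be the lowest nonzero one; then $g_e(\underline{f})$ lies in $\frakm^{e+1}$, and since $f_1,\dots,f_n$ is a regular sequence the associated graded ring $\mathrm{gr}_{(\underline{f})}(A)$ is a polynomial ring over $A/(\underline{f})$ on the initial forms $f_i^*$, so $g_e(\underline{f}^*) = 0$ in degree $e$ forces the coefficients of $g_e$ to lie in $(\underline{f}) \cap K = 0$ — contradiction. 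Hence $\phi$ is injective and $R$ is genuinely a polynomial ring in $n$ variables.

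For flatness, I would use the local criterion for flatness together with the regular-sequence hypothesis. Write $\frakn \colonequals (f_1,\dots,f_n) R$, the homogeneous maximal ideal of the polynomial ring $R$; note $\frakn A = (\underline{f}) A \subseteq \frakm$. By the local criterion (Bourbaki / \cite[Thm.~22.3]{Matsumura} style), since $A$ is Noetherian and $\frakn A$ is contained in the Jacobson radical of $A$, it suffices to check that $\mathrm{Tor}_1^{R}(R/\frakn, A) = 0$ and that $A/\frakn A$ is flat over $R/\frakn = K$ — the latter is automatic since $K$ is a field. For the Tor vanishing, resolve $R/\frakn = R/(\underline{X})$ by the Koszul complex $K_\bullet(\underline{X}; R)$, which is exact because $X_1,\dots,X_n$ is a regular sequence in the polynomial ring $R$. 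Tensoring with $A$ over $R$ gives the Koszul complex $K_\bullet(\underline{f}; A)$, whose homology is $\mathrm{Tor}_\bullet^R(R/\frakn, A)$. But $f_1,\dots,f_n$ is by hypothesis a regular sequence in $A$, so $K_\bullet(\underline{f};A)$ has homology concentrated in degree $0$; in particular $\mathrm{Tor}_i^R(R/\frakn, A) = 0$ for all $i \geq 1$. This gives flatness at the relevant point, and flatness of a finitely generated... — actually $A$ need not be finite over $R$ — so one should instead invoke the graded/local version: the local criterion as stated yields that $A$ is flat over $R$ outright once $\mathrm{Tor}_1^R(R/\frakn,A)=0$ and $R$ is Noetherian with $\frakn$ in the radical of $A$.

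The main obstacle is being careful about \emph{which} form of the local flatness criterion applies, since $A$ is not assumed module-finite over $R = K[\underline{X}]$ and $R$ is not local. The clean fix is to note that $\frakn A \subseteq \frakm$ lies in the Jacobson radical of the Noetherian local ring $A$, and to apply the criterion in the form: an $R$-module $M$ with $M = 0$ or $\frakn M \neq M$, over Noetherian $R$, is flat iff $\mathrm{Tor}_1^R(R/\frakn, M) = 0$ \emph{and} $M/\frakn M$ is $R/\frakn$-flat, \emph{provided} $\frakn$ is contained in every maximal ideal in the support — which here reduces to the single relevant behavior because completion-type arguments let us pass to $R_\frakn$ and $A$ simultaneously. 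Concretely I would localize $R$ at $\frakn$, observe $R_\frakn \to A$ factors the map (as $R \setminus \frakn$ maps to units of $A$, since any $g \notin \frakn$ has $g(\underline f) \notin \frakm$), replace $R$ by the Noetherian local ring $R_\frakn$, and then the standard local criterion \cite[Thm.~22.3]{Matsumura} applies directly: $\mathrm{Tor}_1^{R_\frakn}(R_\frakn/\frakn R_\frakn, A) = 0$ plus flatness of $A/\frakn A$ over the field $R_\frakn/\frakn R_\frakn$ yields $A$ flat over $R_\frakn$, hence over $R$. The Koszul computation above survives localization verbatim, so this completes the proof.
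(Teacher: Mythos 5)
The paper states this result as a cited proposition (attributed to \cite{HartshorneFlat}) and does not reproduce a proof, so there is nothing internal to compare against. Your argument is correct and is essentially the standard one attributed to Hartshorne: for injectivity one uses that the $(\underline{f})$-adic associated graded ring of $A$ is a polynomial ring over $A/(\underline{f})$, and for flatness one localizes the source at the origin, reduces to the local flatness criterion, and computes $\operatorname{Tor}_1$ with the Koszul complex, whose acyclicity on $\underline{f}$ is exactly the regular-sequence hypothesis. Two small remarks. First, in the injectivity step you write that $g_e(\underline{f})$ lies in $\frakm^{e+1}$; what you actually need (and what is true, and what you implicitly use) is the stronger containment $g_e(\underline{f})\in(\underline{f})^{e+1}$, since you are reading off vanishing in degree $e$ of $\operatorname{gr}_{(\underline{f})}(A)$, whose degree-$e$ piece is $(\underline{f})^e/(\underline{f})^{e+1}$; you should also note explicitly that $(\underline{f})\cap K=0$ because $(\underline{f})\subseteq\frakm$ and $K^\times\subseteq A^\times$. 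Second, the middle of your flatness discussion meanders before you land on the right formulation; the clean statement is the one you end with: since every element of $K[\underline{X}]\smallsetminus(\underline{X})$ has constant term a nonzero scalar and hence maps to a unit of $A$, the map factors through a local homomorphism $K[\underline{X}]_{(\underline{X})}\to A$ of Noetherian local rings, $A$ is tautologically a finite module over itself, and \cite[Theorem~22.3]{Matsumura} applies directly; the Koszul computation gives $\operatorname{Tor}_1^{K[\underline{X}]}(K,A)=0$, which localizes to the required vanishing.
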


We now state the main consequence of Theorem \ref{thm:arageneric}.

\begin{theorem}\label{thm:main}
 Let $(R,\frakm)$ be a Noetherian local (or $\NN$-graded) ring. Let $\underline{f}\colonequals f_1,\ldots ,f_n$ be a regular sequence in $R$ (homogeneous if $R$ is graded) and $I\colonequals (\underline{f})$. Let $\mathfrak{a} \subsetneq I$ be an ideal generated by $m$ elements with $m \geq n$ and $J\colonequals \mathfrak{a}:I$ is an $m$-residual intersection of the ideal $I$. Then the arithmetic rank
\[
\mathrm{ara}(J) \le 
\begin{cases} 
n(m-n+1)+1 &   n \ge 2, \\
m & n=1.
\end{cases}
\]

 The upper bound is attained when each of the following conditions are met:
 \begin{enumerate}[\quad\rm(1)]
     \item The residual intersection $J$ is generic.
     \item $R$ has characteristic $0$.
     \item $R$ contains a field or $\frakm \cap\ZZ=(p)$, with $p$ a nonzerodivisor on $R$. 
 \end{enumerate}    
\end{theorem}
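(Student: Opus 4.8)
The plan is to reduce the general assertion to the generic case, which is Theorem~\ref{thm:arageneric}, via specialization for the upper bound and via faithfully flat base change for the equality statements. For the upper bound, recall from \cite[Example 3.4]{HunekeUlrichRI} that if $\fraka = (a_1,\dots,a_m) \subsetneq I$ and we write $[a_1\ \cdots\ a_m]^T = T\underline{f}$ for some $m\times n$ matrix $T$ over $R$, then $J = \fraka:I = I_n(T) + (T\underline{f})$. The generic residual intersection $\RI(m,\underline{y})\subset K[X,\underline{y}]$ (with $K = \ZZ$, or a prime field, depending on the residue characteristic of $R$) specializes onto $J$ under the ring map $K[X,\underline{y}]\to R$ sending $X\mapsto T$ and $\underline{y}\mapsto \underline{f}$: indeed $I_n(X)\mapsto I_n(T)$ and $X\underline{y}\mapsto T\underline{f}$. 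Since a set of set-theoretic generators of $\RI(m,\underline{y})$ maps to a set of set-theoretic generators of its image ideal, and $\sqrt{J}$ is the radical of that image, we get $\ara(J)\le \ara(\RI(m,\underline{y}))$, and Theorem~\ref{thm:arageneric} gives the stated bound. (One must take a little care that ``$\ara$ decreases under specialization'' is applied correctly: if $\sqrt{(g_1,\dots,g_k)} = \sqrt{\RI(m,\underline{y})}$ in $K[X,\underline{y}]$, then applying the ring homomorphism $\psi\colon K[X,\underline{y}]\to R$ gives $\sqrt{(\psi(g_1),\dots,\psi(g_k))} \supseteq \sqrt{\psi(\RI(m,\underline{y}))R} = \sqrt{(I_n(T)+(T\underline{f}))R} = \sqrt{J}$, and the reverse inclusion holds because each $\psi(g_i)\in \sqrt{\psi(\RI(m,\underline{y})R)} = \sqrt{J}$.)

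For the equality under conditions (1)--(3), assume $J$ is generic, so $R$ is itself (a localization or graded piece of) $R_0[X][\underline{f}]$ where $\underline{f}$ is a regular sequence in a Noetherian local or graded ring $R_0$ containing a field $K$ of characteristic zero (or with $\frakm\cap\ZZ = (p)$, $p$ a nonzerodivisor). The key point is Proposition~\ref{thm:Hartshorneflat}: the map $K[\underline{y}]\to R_0$, $y_i\mapsto f_i$, is injective and $R_0$ is flat, hence faithfully flat (it is local, or graded with $\frakm$ mapping into the irrelevant ideal), over $K[\underline{y}]$. Base-changing the generic residual intersection over $K[\underline{y}]$ (not over a field!), one sees that the ideal $\RI(m,\underline{y})\cdot R_0[X]$ becomes, after this faithfully flat extension $K[\underline{y},X]\to R_0[X]$, precisely $J$ up to the identifications $I_n(X)+(X\underline{y})\mapsto I_n(X) + (X\underline{f})$. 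Because arithmetic rank cannot increase under any ring map but also cannot \emph{decrease} under a faithfully flat map (if $f_1,\dots,f_k$ generate $J$ up to radical in $R_0[X]$, descend the radical containment $\sqrt{\RI(m,\underline{y})K[\underline{y},X]}\subseteq \sqrt{(f_i)}$ — more precisely, faithful flatness guarantees that a proper radical equality upstairs would force one downstairs), we conclude $\ara(J) = \ara(\RI(m,\underline{y}))$, which Theorem~\ref{thm:arageneric} evaluates. In characteristic zero this is $n(m-n+1)+1$ for $n\ge 2$; the arithmetic of Lemma~\ref{lem:PS} already recorded that this strictly exceeds $m = \height(J)$ when $m\ge n \ge 2$, so such $J$ is not a set-theoretic complete intersection.

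The main obstacle I anticipate is making the faithfully flat descent of arithmetic rank airtight: one needs that for an ideal $\fraka$ in a ring $A$ and a faithfully flat extension $A\to A'$, $\ara(\fraka) = \ara(\fraka A')$. The inequality $\ge$ is formal. For $\le$, the standard argument is: given $g_1,\dots,g_k\in A'$ with $\sqrt{(g_1,\dots,g_k)} = \sqrt{\fraka A'}$, this is not quite enough because the $g_i$ need not come from $A$; instead one argues at the level of local cohomology or, cleanly, observes that $\ara(\fraka) = \ara(\fraka A')$ holds because the cohomological invariants (local cohomological dimension, and more to the point the explicit lower-bound mechanisms of Sections~\ref{section:InvariantTheory} and~\ref{section:Topology}) are insensitive to faithfully flat base change — so in fact the cleanest route is \emph{not} to prove a general descent statement but to re-run the lower bound directly: by Proposition~\ref{thm:Hartshorneflat}, $R_0[X]$ is faithfully flat over $K[\underline{y},X]$, hence $H^j_{\RI(m,\underline{y})R_0[X]}(R_0[X]) = H^j_{\RI(m,\underline{y})}(K[\underline{y},X])\otimes_{K[\underline{y},X]} R_0[X]$, and the nonvanishing of the latter in degree $j = n(m-n+1)+1$ (established, over a field of characteristic zero, in Proposition~\ref{thm:nullcone} and Theorem~\ref{thm:ASLdimension}) survives the faithfully flat extension. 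Then \cite[Proposition~9.12]{24hours} applied in $R_0[X]$ — valid since conditions (2),(3) guarantee $R_0[X]$ contains a field, or the residual statement over $\ZZ$ of Remark~\ref{rmk:Z} applies — forces $\ara(J)\ge n(m-n+1)+1$, completing the equality. This keeps everything within the local-cohomology obstruction already developed and avoids needing a separate descent lemma.
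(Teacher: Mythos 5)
Your proposal is correct and follows essentially the same approach as the paper: specialization of the generic residual intersection (via \cite[Example~3.4]{HunekeUlrichRI}) for the upper bound, and Proposition~\ref{thm:Hartshorneflat} together with faithfully flat transfer of the local-cohomology nonvanishing from Proposition~\ref{thm:nullcone} (equal-characteristic zero) or Remark~\ref{rmk:Z} (mixed characteristic) for the equality under conditions (1)--(3). The only cosmetic difference is that you first raise and then set aside a putative general faithfully flat descent of arithmetic rank, whereas the paper goes directly to the local cohomology argument you ultimately settle on.
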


\begin{proof}
By \cite[Example 3.4]{HunekeUlrichRI}, any $m$-residual intersection of $I=(\underline{f})$ is a specialization of the generic $m$-residual intersection $\RI(m,\underline{f})$ of $I$ and thus, the arithmetic rank of $J$ is bounded above by $\ara(RI(m,\underline{f}))$. The case $n=1$ follows from the fact that $\RI(m,f_1)$ is an ideal of $m$ variables. For $n>1$, the arithmetic rank of $\RI(m,\underline{f})$ is bounded above by $n(m-n+1)+1$ due to Theorem \ref{thm:arageneric}.

Now we assume that conditions $(1)$, $(2)$, and $(3)$ are met. The case $n=1$ is immediate. For $n>1$, the inequality $\ara(J)\leq n(m-n+1)+1$ follows from Theorem \ref{thm:ASLdimension}. If $R$ contains a field $K$, the natural map of $K[X]$-algebras $$\phi: K[X][y_1,\dots,y_n]\to R[X],$$ which maps $y_i$ to $f_i$ is flat by Proposition \ref{thm:Hartshorneflat}. On localizing, we get a faithfully flat map $$\phi':K[X][y_1,\dots,y_n]_{(X,y_1,\dots,y_n)}\to R[X]_{(\mathfrak{m},X)}$$ which maps $y_i$ to $f_i.$ Let $S=K[X][y_1,\dots,y_n]_{(X,y_1,\dots,y_n)}$ and $d=n(m-n+1)+1$. Note that $H_{\RI(m,\underline{y})}^d(S)\neq 0$, as shown in Proposition \ref{thm:nullcone}. Let $R'=R[X]_{(\mathfrak{m},X)}$ and $J'=J_{(\mathfrak{m},X)}$. We get that the local cohomology module \[H_{J'}^d(R')=H_{\RI(m,\underline{y})}^d(S)\otimes_S R'\neq 0,\] as $R'$ is a faithfully flat $S$-module. Therefore $\ara(J)\geq\ara(J')\geq d=n(m-n+1)+1.$

In the case that $R$ does not contain a field, but $m\cap \mathbb{Z}=(p)$, with $p$ a nonzerodivisor on $R$, we construct an analogue of the above argument. Under these conditions, the map $$\phi:\ZZ_{(p)}[X][y_1,\dots,y_n]\to R[X]$$ is flat. As $\ZZ_{p}$ is a flat $\ZZ$-module, we repeat the above argument using the crucial input that due to Remark \ref{rmk:Z}, we have
\[H^d_{\RI(m,\underline{y})}(\ZZ[X][y_1,\dots,y_n])\neq 0. \qedhere\]
\end{proof} 

\appendix \section{A second take on the dimension of the auxiliary ring}\label{section:Appendix}

Recall that the dimension of the auxiliary ring $K[B]$ is the arithmetic rank of the residual intersection ideal $\RI(m,\underline{y})$. This dimension was computed in Section \ref{section:ASL} by endowing $K[B]$ with the structure of an ASL. 

In this section, we discuss another strategy to compute the dimension of $K[B]$. We find an explicit transcendence basis for its field of fractions. While the obvious advantage is that this route avoids the combinatorial background needed for the arguments of Section \ref{section:ASL}, the proof---though completely self-contained---turns out to be perhaps more involved. 
\begin{ntn}\label{trans_notation}

\begin{itemize}
\
\item $m \geq n$ are positive integers. $X$ and $\underline{y}$ are $m\times n$ and $n\times 1$ matrices of indeterminates over a field $K$. 

\item $[i_1,\dots,i_n]$ is the maximal minor of $X$ indexed by the rows $i_1<i_2<\cdots< i_n$ of $X$.

\item $Q_i$ is the $i$-th entry of the matrix $X\underline{y}$.

\item The specialization $X'$ of $X$ is  
\[
X'_{ij}= \begin{cases} 0 &  \text{$j\neq 1$, $j\neq i$, and $i\leq n$}, \\ X_{ij} & \text{otherwise}.\end{cases}
\]

\noindent That is,
\[
X' = \begin{bmatrix}
x_{11} & 0 & 0 & \cdots & 0 & 0 \\
x_{21} & x_{22} & 0 & \cdots & 0 & 0 \\
x_{31} & 0 & x_{33} & \cdots & 0 & 0 \\
\vdots & \vdots & \vdots & \ddots & \vdots & \vdots \\
x_{n-1,1} & 0 & 0 & \cdots & x_{n-1,n-1} & 0 \\ 
x_{n,1} & 0 & 0 & \cdots & 0 & x_{n,n} \\
x_{n+1,1} & x_{n+1, 2} & x_{n+1, 3} & \cdots & x_{n+1, n-1} & x_{n+1, n}\\
\vdots & \vdots & \vdots &  & \vdots & \vdots \\
x_{m,1} & x_{m,2} & x_{m,3} & \cdots & x_{m, n-1} & x_{m, n}\\
\end{bmatrix}.
\]

\item The specialization $\underline{{y'}}$ of $\underline{y}$ is 
\[
\underline{y'} = \begin{bmatrix} 1 & 0 & \ldots & 0 & 0
\end{bmatrix}^{T}.
\]

\item $B \colonequals  \{Q_1,\dots,Q_m\}\bigcup \{[i_1,\dots,i_n]~|~ i_1<i_2<\cdots< i_n\}$ and $K[B]$ is the $K$-subalgebra of $K[X,\underline{y}]$ generated by $B$. 

\item For integers $i$ and $j$ with $n < i \leq m$ and $2\leq j \leq n$, set

\[
M_{i,j} = [1,2,\dots,j-1,\widehat{j}, j+1, \dots, n, i].
\]

\noindent In addition, set

\[
M_{n,n} = [1, 2, \dots, n-1, n ].
\]

\end{itemize}
\end{ntn}

\begin{theorem}\label{trans_basis}
Fix Notation \ref{trans_notation}. The set of elements 
\[
D \colonequals \{\,M_{i,j} : n < i \le m,\; 2 \le j \le n\,\} \;\bigcup\; \{M_{n,n},Q_1,\dots,Q_m\}
\]

\noindent is a transcendence basis for the field of fractions of $K[B]$. Therefore
\[\dim(K[B])=n(m-n+1)+1.\]
\end{theorem}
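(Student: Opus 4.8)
The plan is to establish the two inequalities $\operatorname{tr.deg}_K\operatorname{Frac}(K[B])\ge|D|$ and $\operatorname{tr.deg}_K\operatorname{Frac}(K[B])\le|D|$ together with the count $|D|=(m-n)(n-1)+1+m=n(m-n+1)+1$; since $K[B]$ is a finitely generated domain over $K$ its Krull dimension equals $\operatorname{tr.deg}_K\operatorname{Frac}(K[B])$, which yields the dimension formula and shows that the (then) algebraically independent set $D$ is a transcendence basis. For $S\subseteq K[X,\underline{y}]$ write $K(S)$ for the fraction field of $K[S]$, so $\operatorname{Frac}(K[B])=K(B)$; since $D\subseteq B$ the inclusion $K(D)\subseteq K(B)$ is automatic. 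I will prove the upper inequality in the stronger form $K(B)=K(D)$, and the lower inequality by showing $D$ is algebraically independent over $K$.

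To prove $K(B)=K(D)$: since the $Q_i$ lie in $D$, it suffices to place every maximal minor of $X$ in $K(D)$. First, for $n<i\le m$, adjoin to the rows $1,\dots,n,i$ of $X$ the column $(Q_1,\dots,Q_n,Q_i)^{T}$ and expand the resulting (vanishing) $(n{+}1)\times(n{+}1)$ determinant along that column; this gives $Q_1\cdot[2,\dots,n,i]=\sum_{j=2}^{n}\pm\,Q_jM_{i,j}\;\pm\,Q_iM_{n,n}$, and dividing by the nonzero element $Q_1\in K[D]$ shows $M_{i,1}:=[2,\dots,n,i]\in K(D)$. Hence $K(D)$ contains $D':=\{M_{n,n}\}\cup\{\,[1,\dots,\widehat{j},\dots,n,i]:1\le j\le n,\ n<i\le m\,\}$. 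Now invert the nonzero element $M_{n,n}=[1,\dots,n]$, set $Y:=X^{T}$, and form $Y':=(Y_{\{1,\dots,n\}})^{-1}Y$; then $Y'$ has the identity as its submatrix on columns $1,\dots,n$, and by Cramer's rule its entry in row $j$, column $n+k$ is $\pm[1,\dots,\widehat{j},\dots,n,n+k]/M_{n,n}\in K(D')$. Since $[i_1,\dots,i_n]=M_{n,n}\cdot\det\big(Y'_{\{i_1,\dots,i_n\}}\big)$ and the determinant on the right has all entries in $K(D')$, every maximal minor lies in $K(D')\subseteq K(D)$; thus $K(B)\subseteq K(D)$.

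To prove $D$ is algebraically independent, assume $n\ge 2$ (the case $n=1$ is elementary: there $D=\{[1]\}\cup\{Q_1,\dots,Q_m\}$ with $K(D)=K(x_{11},x_{21},\dots,x_{m1},y_1)$ of transcendence degree $m+1=|D|$). Consider the $K$-algebra map $\psi\colon K[X,\underline{y}]\to K[x_{ij}]$ sending $\underline{y}\mapsto\underline{y'}=(1,0,\dots,0)^{T}$ and $X\mapsto X'$, the sparse specialization of Notation \ref{trans_notation}. A direct computation gives $\psi(Q_i)=x_{i1}$, $\psi(M_{n,n})=x_{11}x_{22}\cdots x_{nn}$ (the first $n$ rows of $X'$ form a triangular matrix with that diagonal), and, for $n<i\le m$ and $2\le j\le n$, $\psi(M_{i,j})=\pm\,x_{ij}\!\!\prod_{1\le k\le n,\ k\ne j}\!\!x_{kk}$ (in the relevant submatrix of $X'$ the $j$-th column has $x_{ij}$ as its only nonzero entry, and deleting that row and column leaves a triangular matrix). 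These $|D|$ monomials are algebraically independent over $K$ because their exponent vectors are $\QQ$-linearly independent: reading off the coordinate $x_{ij}$ with $n<i$ and $j\ge 2$ forces each $M_{i,j}$-coefficient of a dependence to vanish; then the coordinates $x_{kk}$ with $2\le k\le n$ force the $M_{n,n}$-coefficient to vanish; and finally the first-column coordinates force the $Q_i$-coefficients to vanish. As $\psi$ carries any polynomial relation among the members of $D$ to one among their images, $D$ is algebraically independent over $K$.

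Combining the two halves, $D$ is an algebraically independent subset of $\operatorname{Frac}(K[B])$ with $K(D)=\operatorname{Frac}(K[B])$, hence a transcendence basis; thus $\operatorname{tr.deg}_K\operatorname{Frac}(K[B])=|D|=n(m-n+1)+1$, which is also $\dim K[B]$ since $K[B]$ is a finitely generated $K$-domain. The main obstacle is the ``big-cell'' half of the second paragraph: one must correctly identify the entries of $Y'$ with the normalized special minors through Cramer's rule—signs included—and then observe that every remaining maximal minor is a polynomial in those entries; getting the signs and index bookkeeping right in the $(n{+}1)\times(n{+}1)$ cofactor relation and in the determinant evaluations $\psi(M_{n,n})$ and $\psi(M_{i,j})$ is the other point where care is needed. (Alternatively, the lower inequality could be obtained at once from Theorem \ref{thm:ASLdimension}, since a generating set of $\operatorname{Frac}(K[B])$ of cardinality $\dim K[B]$ is automatically a transcendence basis; but the purpose of this appendix is to argue from first principles.)
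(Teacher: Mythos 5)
Your argument is correct, and while the algebraic-independence half (specialization to $X',\underline{y'}$, then comparing exponent vectors of the resulting monomials) is essentially the same as the paper's, your ``spanning'' half $K(B)=K(D)$ is a genuinely different and in fact tidier route. The paper proceeds by defining the filtration $N_k$ of maximal minors containing row $1$ according to how many indices exceed $n$, proves by induction on $k$ that $N_k\subset\mathrm{Frac}(K[D])$ via Pl\"ucker relations, and only then handles minors not containing row $1$ by expanding a vanishing $(n{+}1)\times(n{+}1)$ determinant with a $Q$-column adjoined. You instead deploy that determinantal expansion \emph{first} (expanding along the $Q$-column for rows $1,\dots,n,i$, then dividing by $Q_1$) to capture the minors $[2,\dots,n,i]$, which together with $D$ give exactly the Pl\"ucker coordinates of the big affine cell; then a single application of Cramer's rule after inverting $M_{n,n}=[1,\dots,n]$ expresses every maximal minor as $M_{n,n}$ times a determinant of the normalized matrix $Y'$, whose entries are already in $K(D')$. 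This ``big-cell'' argument replaces the paper's Pl\"ucker-relation induction with a one-shot linear-algebra identity $\det(Y_{\{i_1,\dots,i_n\}})=\det(Y_{\{1,\dots,n\}})\det(Y'_{\{i_1,\dots,i_n\}})$, at the modest cost of requiring a preliminary step to acquire the ``$j=1$'' minors. Both routes hinge on inverting the same elements of $D$ ($Q_1$ in the preliminary step, $[1,\dots,n]$ in the main one), so there is no gain in generality, but yours is more conceptually transparent as it makes the Grassmannian big-cell structure explicit.
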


\begin{proof}

We first show that $D$ is an algebraically independent set over $K$. It suffices to find specializations $X'$ and $Y'$ of $X$ and $Y$ respectively such that the corresponding specialization $D'$ of $D$ is algebraically independent. These matrices $X'$ and $Y'$ are as in Notation \ref{trans_notation}.

We begin with describing the elements of $D'$. After specializing $X$ to $X'$ and $Y$ to $Y'$, straightforward calculations show that each $Q_i$ specializes to 
\[
Q_i' = x_{i,1},
\]

\noindent the maximal minor $M_{n,n}$ specializes to 
\[
M_{n,n}' = \prod_{k=1}^n x_{k,k},
\]

\noindent and any other maximal minor $M_{i,j}\in D$ specializes to 
\[
M_{i,j}' =  (-1)^{n+j}x_{i,j}\prod_{k\neq j}^nx_{k,k}.
\]
Suppose that
\begin{equation}\label{alg_relation}
\sum_{\alpha}k_{\alpha}m_{\alpha} = 0
\end{equation}

\noindent is an algebraic relation among the elements of $D'$ over $K$ i.e., each $m_{\alpha}$ is a monomial in the elements of $D'$. For fixed  $m_\alpha$ and $d\in D'$, we set
\[
{\mathrm{exp}(m_{\alpha},d)} \colonequals \max\left\{k\in \mathbb{Z}_{\geq 0}~|~\text{$d^k$ divides $m_\alpha$}\right\}.
\]

\noindent Similarly, for fixed $m_\alpha$ and $z\in X'$, we set
\[
\mathrm{exp}(m_{\alpha},z) \colonequals \max\left\{k\in \mathbb{Z}_{\geq 0}~|~\text{$z^k$ divides $m_\alpha$}\right\}.
\]
\noindent Straightforward calculations show

\begin{equation}\label{x_eq_1}
\exp(m_\alpha,x_{(i,1)}) = \begin{cases} \exp(m_\alpha, Q_1') + \sum\limits_{\substack{2\leq t \leq n \\ n< s \leq m }}  \exp(m_\alpha, M_{s,t}')+ \exp(m_\alpha,M_{n,n}') & i=1
\\
\exp(m_\alpha, Q_i') & 2\leq i \leq m,
\end{cases}
\end{equation}

\begin{equation}\label{x_eq_2}
\exp(m_\alpha, x_{j,j}) = \sum\limits_{\substack{t\neq j \\ n< s \leq m}} \exp(m_\alpha, M_{s,t}') +\exp(m_\alpha, M_{n,n}')
\end{equation}

\noindent  for $2 \leq j \leq n$, and  
\begin{equation}\label{x_eq_3}
\exp(m_\alpha, x_{i,j}) = \exp(m_\alpha, M_{i,j}'),
\end{equation}

\noindent for all $n< i \leq m$ and $2\leq j \leq n$.

From these calculations, we observe that if $\alpha\neq \beta$ then $m_\alpha \neq m_\beta$. Indeed, for sake of contradiction assume that $m_\alpha = m_\beta$. In particular

\[
\exp(m_\alpha,z) = \exp(m_\beta,z)
\]

\noindent for all $z\in X'$. Equation \ref{x_eq_3} implies that 

\begin{equation}\label{M_eq}
\exp(m_\alpha, M_{i,j}') = \exp(m_\beta, M_{i,j}')
\end{equation}
\noindent for all $n< i \leq m$ and $2\leq j \leq n$. But then Equations \ref{x_eq_2} and \ref{M_eq} yield 

\begin{equation}\label{Mn_eq}
\exp(m_\alpha,M_{n,n}')=\exp(m_\beta,M_{n,n}').
\end{equation}

\noindent In turn, Equations \ref{x_eq_1}, \ref{M_eq}, and \ref{Mn_eq} give
\begin{equation}\label{Q_eq}
\exp(m_\alpha,Q_i') = \exp(m_\beta,Q'_i)
\end{equation}
\noindent for all $1\leq i\leq m$. It follows that $\alpha =\beta$. In view of this, the equality $\sum_{\alpha}k_{\alpha}m_{\alpha} = 0$ implies that $k_\alpha = 0$ for all $\alpha$. Hence $D'$ is algebraically independent over $K$.





Now, in order to show $D$ is a transcendence basis of $\mathrm{Frac}(K[B])$, it is enough to show that each element of $B$ is in $\mathrm{Frac}(K[D])$. To this end, set
\[
N_k \colonequals \{[1,j_2,\dots,j_{n-k},i_1,\dots,i_k]~|~ 2 \leq j_2 < j_3 < \cdots < j_{n-k} \leq n < i_1 < \cdots < i_k \leq m\}
\]

\noindent for $k=1, \dots n-1$. We show each $N_k$ is contained in $\mathrm{Frac}(K[D])$ by induction on $k$. This is clear for $n=1$ as $N_1 \subset D$. Inductively, assume that $N_{k-1} \subset \mathrm{Frac}(K[D])$ for each $k$ with $1 \leq k-1 < n-1$; let $[1,j_2,\dots, j_{n-k},i_1,\dots,i_k]\in N_k$. The Pl{\"u}cker relation corresponding to
\[
\{1,j_2,\dots, j_{n-k},i_1,\dots,i_{k-1}\} \quad \text{and \quad} \{1,2,\dots, n-2,n-1,n,i_k\}
\]
\noindent is
\begin{equation}\label{plukrel}
\sum_{s = 1}^{n} P_s +[1,j_2,\dots, j_{n-k},i_1,\dots, i_k][1,2,\dots, n-1,n] = 0,
\end{equation}
where for $s=1,\dots, n$ (before reordering indices).
\[
P_s = [1, j_2, \dots, j_{n-k}, i_1,\dots i_{k-1}, s][1,2 \dots, \widehat{s}, \dots , n, i_k].
\]

Note that if $s \in \{1,j_2,\dots, j_{n-k}\}$, we have $P_s=0$ by convention. In any case $P_s$ is a product of an element of $N_{k-1}$ and an element of $N_1$, and therefore $P_s\in \mathrm{Frac}(K[D])$ for $s=1,\dots n$. Therefore, by Equation \ref{plukrel} and the fact that $[1,2,\dots,n-1, n]\in D$, it follows that $[1,j_2,\dots, j_{n-k},i_1,\dots, i_k]\in \mathrm{Frac}(K[D])$. This completes the induction step. Therefore, $N_k\subset \mathrm{Frac}(K[D])$ for all $k=1,\dots, n-1$. 

We now show that every maximal minor is in $\mathrm{Frac}(K[D])$. Since the union of sets $$\bigcup_{k=1}^{n-1}N_k$$ consists of all maximal minors of X that contain the first row, it suffices to show that every maximal minor that does \textit{not} contain the first row is in $\mathrm{Frac}(K[D])$. To this end, consider a maximal minor $[j_1,\dots, j_n]$ with $2\leq j_1 <\cdots < j_n \leq m$. The matrix 
\[
\begin{bmatrix}
x_{1,1} & x_{1,2} & \cdots & x_{1,n} & Q_{1}\\
x_{j_1,1} & x_{j_1,2} & \cdots & x_{j_1,n} & Q_{j_1} \\
x_{j_2,1} & x_{j_2,2} & \cdots & x_{j_2,n} & Q_{j_2} \\
\vdots & \vdots & \ddots & \vdots & \vdots \\
x_{j_n,1} & x_{j_n,2} & \cdots & x_{j_n,n} & Q_{j_n} \\
\end{bmatrix}
\]
is singular as the last column is a $K[Y]$-linear combination of the other columns. Expanding its determinant along the last column, we obtain
\[
(-1)^{n+2}[j_1,\dots j_n]Q_1 + \sum_{k=1}^n (-1)^{k+n+2}[1,j_1,\dots \widehat{j_k},\dots, j_n]Q_{j_k} = 0.
\]
As each $Q_i\in D$ by design and $[1,j_1,\dots \widehat{j_k},\dots, j_n] \in \mathrm{Frac}(K[D])$, we deduce that $[j_1,\dots, j_n]$ is in the field of fractions $\mathrm{Frac}(K[D])$. This finishes the proof.
\end{proof}

\begin{remark}
We warn the reader that the transcendence basis of $K[B]$ does not give a system of parameters of $K[B]$, or set-theoretic generators of $\RI(X,\underline{y})$. Indeed, in the setup of Example \ref{example:posetorder}, Theorem \ref{trans_basis} asserts that a transcendence basis of $K[B]$ is given by $\{Q_1,Q_2,Q_3,Q_4,[1,2],[1,3],[1,4]\}$. However, 
    \[[3,4]\notin\sqrt{(Q_1,Q_2,Q_3,Q_4,[1,2],[1,3],[1,4])} K[X,\underline{y}].\]

\end{remark}

\section*{Acknowledgments}
We thank Corrado De Concini and Claudio Procesi for their detailed comments on Section \ref{section:ASL} of the paper. We thank Anurag Singh for suggesting an improvement in one of our results. We are also grateful to Barbara Betti, Aldo Conca, Eloísa Grifo, Jack Jeffries, Linquan Ma, Aryaman Maithani, Anurag Singh, Bernd Ulrich, and Uli Walther for several valuable discussions. We thank the referee for their comments. 

MB was supported by NSF Grants DMS-2302430, DMS-2100288, and Simons Foundation Grant SFI-MPS-TSM-00012928. KMS was supported by NSF Grant DMS-2236983. TM was supported by NSF Grant DMS-2044833.

\bibliographystyle{alpha}
\bibliography{main.bib}

\end{document}